\newtheorem{theorem}{Theorem}
\newtheorem{lemma}{Lemma}
\newtheorem{definition}{Definition}
\newtheorem{remark}{Remark}
\newtheorem{corollary}{Corollary}
\newtheorem{property}{Property}
\newenvironment{proof}{\vspace{1ex}\noindent{\bf Proof.}\hspace{0.5em}}
    {\hfill\qed\vspace{1ex}}
\def\qed{\hfill \vrule height 6pt width 6pt depth 0pt}
\newcommand{\onetom}{1,\cdots,m}
\newcommand{\onetoK}{1,\cdots,K}
\title{\ \\ \LARGE\bf
Pull-Based Distributed Event-triggered Consensus for Multi-agent Systems
with Directed Topologies
\thanks{Xinlei Yi is with the School of Mathematical Sciences, Fudan University;
Wenlian Lu (corresponding author) is with the Centre for Computational
Systems Biology and School of Mathematical Sciences, Fudan University, and
Department of Computer Science, The University of Warwick, Coventry, United
Kingdom; Tianping Chen is with the School of Computer Science and School of
Mathematical Sciences, Fudan University, Shanghai 200433, China (email:
\{yix11, wenlian, tchen\}@fudan.edu.cn).} \thanks{This work is jointly
supported by the Marie Curie International Incoming Fellowship from the
European Commission (FP7-PEOPLE-2011-IIF-302421), the National Natural
Sciences Foundation of China (Nos. 61273211 and 61273309), and the Program
for New Century Excellent Talents in University (NCET-13-0139).}}
\author{Xinlei Yi, Wenlian Lu and Tianping Chen}
\begin{document}
\maketitle
\begin{abstract}

This paper mainly investigates consensus problem with pull-based
event-triggered feedback control. For each agent, the diffusion coupling
feedbacks are based on the states of its in-neighbors at its latest
triggering time and the next triggering time of this agent is determined by
its in-neighbors' information as well. The general directed topologies,
including irreducible and reducible cases, are investigated. The scenario
of distributed continuous communication is considered firstly. It is proved that if
the network topology has a spanning tree, then the event-triggered coupling
strategy can realize consensus for the multi-agent system. Then the results
are extended to discontinuous communication, i.e., self-triggered control,
where each agent computes its next triggering time in advance without
having to observe the system's states continuously. The effectiveness of
the theoretical results are
illustrated by a numerical example finally.\\

\noindent{\bf Keywords:} Directed, irreducible, reducible, consensus,
multi-agent systems, event-triggered, self-triggered.
\end{abstract}
\section{Introduction}
Consensus problem in multi-agent systems has been
widely and deeply investigated. The basic idea of consensus lies in that
each agent updates its state based on its own state and the states of its
neighbors in such a way that the final states of all agents converge to a
common value \cite{Fcz}. The model normally is of the following form:
\begin{align}\label{mg4}
\dot{x}(t)=-Lx(t)
\end{align}
where the column vector $x(t)$ consists of all nodes' states and $L$ is the
corresponding weighted Laplacian matrix. There are many results reported in
this field  \cite{Fcz}-\cite{Liubo} and the references therein. In these
researches, the network topologies vary from fixed topologies to
stochastically switching topologies, and the most basic condition to
realize a consensus is that the underlying graph of the network system has
a spanning tree.

In recent years, with the development of sensing, communications, and
computing equipment, event-triggered control \cite{Pta}-\cite{Crf} and
self-triggered control \cite{Aapt}-\cite{Aap} have been proposed and
studied. Instead of using the continuous state to realize a consensus, the
control in event-triggered control strategy is piecewise constant between
the triggering times which need to be determined. 
Self-triggered control is a natural extension of the event-triggered
control since the derivative of the concern multi-agent system's state is
piecewise constant, which is very easy to work out solutions (agents'
states) of the system. In particular, each agent predicts its next
triggering time at the previous one. Inspired by above idea of event-triggered control and self-triggered control, \cite{Dvd}-\cite{Now} considered the consensus problem for multi-agent systems with event-triggered control. In particular,  in \cite{Dvd}, under the condition
that the graph is undirected and strongly connected, the authors provide
event-triggered and self-triggered approaches in both centralized and
distributed formulations. It should be emphasized that the approaches
cannot be applied to directed graph. In \cite{Zliu}, the authors
investigate the average-consensus problem of multi-agent systems with
directed and weighted topologies, but they need an additional assumption
that the directed topology must be balanced. In \cite{Yfg}, the authors
propose a new combinational measurement approach to event design, which
will be used in this paper.

In this paper, continuing with previous works, we study event-triggered and
self-triggered consensus in multi-agent system with directed, reducible
(irreducible) and weighted topology.

Consider the following continuous-time linear multi-agent system with
discontinuous diffusions as follows
\begin{align}
\begin{cases}
\dot{x}_{i}(t)=u_{i}(t)\\
u_{i}(t)=-\sum_{j=1}^{m}L_{ij}x_{j}(t_{k_{i}(t)}^{i}),~i=\onetom\end{cases}\label{mg3}
\end{align}
where $k_{i}(t)=arg\max_{k}\{t^{i}_{k}\le t\}$, the increasing time sequence $\{t_{k}^{j}\}_{k=1}^{\infty}$, $j=\onetom$, which is named as {\em trigger times}, is agent-wise and normally assuming $t_{1}^{j}=0$, for all $j\in \mathcal I$, where $\mathcal I=\{1, 2,\cdots,
m\}$. We say agent $v_i$ triggers at time $t^{i}_{k}$ means agent $v_{i}$ renews its control value at time $t^{i}_{k}$ and sends $t^{i}_{k}$, $x_i(t^{i}_{k})$ and $u_i(t^{i}_{k})$ to all its out-neighbours immediately.
At each $t_{k}^{i}$, each agent $v_i$ ``pulls'' $x_{j}(t_{k}^{i})$ from agent $v_i$ if $L_{ij}\neq0$.
(This does not mean that agent $v_i$ has to send a request to its in-neighbours at $t^{i}_{k}$ in order to get its in-neighbours' states at $t^{i}_{k}$. Instead, in event-triggered control, agent $v_i$'s in-neighbours has to send its state to agent $v_i$ continuously. And we will also give an algorithm to avoid such continuous communication later.) In order to distinguish it from others, we name this sort of feedback as {\em pull-based}.

Let us recall the model
\begin{align*}
x^{i}(t+1)=f(x^{i}(t))+c_{i}\sum_{j=1}^{m}a_{ij}(f(x^{j}(t)))
\end{align*}
where $\dot{s}(t)=f(s(t))$ is a chaotic oscillator. It was proposed and
investigated in \cite{LC2004} for synchronization of chaotic systems. It
can also be considered as nonlinear consensus model.

As a special case, let $f(x(t))=x(t)$ and $c_{i}=(t_{k+1}^{i}-t_{k}^{i})$,
then
\begin{align*}
x^{i}(t_{k+1}^{i})=x^{i}(t_{k}^{i})+(t_{k+1}^{i}-t_{k}^{i})\sum_{j=1}^{m}a_{ij}x^{j}(t_{k}^{j})
\end{align*}
which is just the event triggering (distributed) model for consensus
problem, though the term "event triggering" was not used. In centralized
control, the bound for $(t_{k+1}^{i}-t_{k}^{i})=(t_{k+1}-t_{k})$ to reach
synchronization was given in that paper when the coupling graph is
indirected (or in \cite{LC2007} for direct
graph), too.

In this paper, the distributed continuous monitoring with pull-based
feedback as the event-triggered controller is considered firstly, namely
agent can observe its in-neighbours' continuous states by its in-neighbours sending their continuous states to it.
It is proved that if the directed network topology is irreducible, then the
pull-based event-triggered coupling strategy can realize consensus for the
multi-agent system. Then we generalize it to the reducible case. By
mathematical induction, it is proved that if the network topology has a
spanning tree, then the pull-based event-triggered coupling strategy can
realise consensus for the multi-agent system, too. Finally the results are
extended to discontinuous monitoring, where each agent computes its next
triggering time in advance without having to receive the system's state
continuously (self-triggered).

In comparison to literature, we have three main contributions: (i) different from \cite{Dvd}-\cite{Mxy} and \cite{Now}, we investigate directed topologies, including irreducible and reducible cases, and we do not make assumption that they are balanced; (ii) different from \cite{Gss}, \cite{Mxy} and \cite{Zzf}, the  event-triggered principles in our paper are fully distributed in the sense that each agent only needs its in-neighbours' state information, especially does not need any a priori knowledge of any global parameter and the Zeno behaviour can be excluded; (iii) different from \cite{Zliu}-\cite{Zzf}, we propose self-triggered principle, by which continuous communication between agents can be avoided.

The paper is organized as follows: in Section \ref{sec2}, some necessary
definitions and lemmas are given; in Section \ref{pull}, the pull-based
event-triggered consensus in multi-agent systems with directed topologies
is discussed; in Section \ref{sec4}, the self-triggered formulation of the
frameworks provided in Section \ref{pull} is presented; in Section
\ref{sec5}, one numerical example is provided to show the effectiveness of
the theoretical results; the paper is concluded in Section \ref{sec6}.

\section{Preliminaries}\label{sec2}
In this section we first review some relating notations, definitions and results on algebraic graph theory \cite{Die,Rah} which will be used later in this paper.

\noindent {\bf Notions}: $\|\cdot\|$ represents the Euclidean norm for
vectors or the induced 2-norm for matrices.  $\bf 1$ denotes the column
vector with each component 1 with proper dimension. $\rho(\cdot)$ stands for
the spectral radius for matrices and $\rho_2(\cdot)$ indicates the minimum
positive eigenvalue for matrices having positive eigenvalues. Given two
symmetric matrices $M,N$, $M>N$ (or $M\ge N$) means $M-N$ is a positive
definite (or positive semi-definite) matrix.

For a weighted directed graph (or digraph) $\mathcal G=(\mathcal V,
\mathcal E, \mathcal A)$ with $m$ agents (vertices or nodes), the set of
agents $\mathcal V =\{v_1,\cdots,v_m\}$, set of links (edges) $\mathcal E
\subseteq \mathcal V \times \mathcal V$, and the weighted adjacency matrix
$\mathcal A =(a_{ij})$ with nonnegative adjacency elements $a_{ij}>0$. A link
of $\mathcal G$ is denoted by $e(i,j)=(v_i, v_j)\in \mathcal E$ if there is
a directed link from agent $v_j$ to agent $v_i$ with weight $a_{ij}>0$, i.e. agent $v_j$ can send
information to agent $v_i$ while the opposite direction transmission might not exist or with different weight $a_{ji}$. The adjacency elements associated with the links of the graph
are positive, i.e., $e(i,j)\in \mathcal E\iff a_{ij}>0$, for all $i,
j\in\mathcal I$. It is assumed that $a_{ii}=0$ for all $i\in \mathcal I$.
Moreover, the in- and out- neighbours set of agent $v_i$ are defined as
\begin{eqnarray*}
N^{in}_i=\{v_j\in \mathcal V\mid a_{ij}>0\},~~~N^{out}_i=\{v_j\in \mathcal V\mid a_{ji}>0\}
\end{eqnarray*}
The in- and out- degree of agent $v_i$ are defined as follows:
\begin{eqnarray*}
deg^{in}(v_i)=\sum\limits_{j=1}^{m}a_{ij},~~~deg^{out}(v_i)=\sum\limits_{j=1}^{m}a_{ji}
\end{eqnarray*}
The degree matrix of digraph $\mathcal G$ is defined as
$D=diag[deg^{in}(v_1), \cdots, deg^{in}(v_m)]$. The weighted Laplacian
matrix associated with the digraph $\mathcal G$ is defined as $L=D-\mathcal
A$. A directed path from agent $v_0$ to agent $v_k$ is a directed graph
with distinct agents $v_0,...,v_k$ and links $e_0,...,e_{k-1}$ such that
$e_i$ is a link directed from $v_i$ to $v_{i+1}$, for all $i<k$.
\begin{definition}
We say a directed graph $\mathcal G$ is strongly connected if for any two
distinct agents $v_{i},~v_{j}$, there exits a directed path from $v_{i}$ to
$v_{j}$.
\end{definition}

By \cite{Rah}, we know that strongly connectivity of $\mathcal G$  is
equivalent to the corresponding Laplacian matrix $L$ is irreducible.
\begin{definition}
We say a directed graph $\mathcal G$ has a spanning tree if there exists at least
one agent $v_{i_{0}}$ such that for any other agent $v_{j}$, there exits a
directed path from $v_{i_{0}}$ to $v_{j}$.
\end{definition}

By Perron-Frobenius theorem \cite{LC2006} (for more detail and proof, see
\cite{LC2007cms}), we have
\begin{lemma}\label{lem2}
If $L$ is irreducible, then $rank(L)=m-1$, zero is an algebraically simple
eigenvalue of $L$ and there is a positive vector
$\xi^{\top}=[\xi_{1},\cdots,\xi_{m}]$ such that $\xi^{\top} L=0$ and
$\sum_{i=1}^{m}\xi_{i}=1$. 
\end{lemma}

Let $\Xi=diag[\xi_{1},\cdots,\xi_{m}]$, 
by the results first given in \cite{LC2006},
we have
\begin{lemma}\label{lem1}
If $L$ is irreducible, then $\Xi L+L^{\top}\Xi$ is a symmetric matrix with
all row sums equal to zeros and has zero eigenvalue with algebraic
dimension one.
\end{lemma}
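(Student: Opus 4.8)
The plan is to verify the three asserted properties of $Q := \Xi L + L^{\top}\Xi$ in turn, relying throughout on Lemma \ref{lem2}, which supplies the positive left eigenvector $\xi$ with $\xi^{\top}L = 0$ and the fact that $\Xi = \mathrm{diag}[\xi_1,\dots,\xi_m]$ is symmetric with strictly positive diagonal. Symmetry is immediate: since $\Xi^{\top}=\Xi$, transposing gives $Q^{\top} = L^{\top}\Xi + \Xi L = Q$.

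For the row sums I would compute $Q\mathbf 1$. Because $L$ is a Laplacian its rows sum to zero, so $L\mathbf 1 = 0$ and hence $\Xi L\mathbf 1 = 0$. For the remaining term, note that $\Xi\mathbf 1 = \xi$, so $L^{\top}\Xi\mathbf 1 = L^{\top}\xi = (\xi^{\top}L)^{\top} = 0$ by the left-eigenvector property. Adding the two pieces yields $Q\mathbf 1 = 0$, i.e. every row sum vanishes; in particular $\mathbf 1 \in \ker Q$, so $0$ is an eigenvalue of $Q$.

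The substantive step, which I expect to be the main obstacle, is showing that this zero eigenvalue is \emph{simple}. Since $Q$ is symmetric, algebraic and geometric multiplicities coincide, so it suffices to prove $\ker Q = \mathrm{span}\{\mathbf 1\}$. I would establish the quadratic-form identity
\begin{align*}
x^{\top}Qx = 2\,x^{\top}\Xi L x = \sum_{i,j=1}^{m}\xi_i a_{ij}(x_i - x_j)^2 .
\end{align*}
The first equality holds because the scalar $x^{\top}Qx$ equals its own transpose. The second is the delicate part: expanding $x^{\top}\Xi L x = \sum_{i,j}\xi_i a_{ij}x_i(x_i - x_j)$ and invoking $\xi^{\top}L = 0$, which entrywise reads $\sum_i \xi_i a_{ij} = \xi_j\sum_k a_{jk}$ (a $\xi$-weighted balance of in-flow and out-flow at each node), lets me rewrite the diagonal contribution symmetrically and complete the square.

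Granting this identity, each coefficient $\xi_i a_{ij}\ge 0$ with $\xi_i>0$, so $x^{\top}Qx$ is a nonnegative sum of squares; thus $Q$ is positive semidefinite, and $x^{\top}Qx = 0$ forces $x_i = x_j$ for every pair with $a_{ij}>0$, i.e. across every edge of $\mathcal G$. Irreducibility (strong connectivity) of $L$ then propagates this equality along directed paths to all nodes, giving $x\in\mathrm{span}\{\mathbf 1\}$. Hence $\ker Q$ is exactly one-dimensional and the zero eigenvalue has algebraic multiplicity one, which completes the proof.
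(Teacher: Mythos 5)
Your proof is correct. Note that the paper itself offers no proof of this lemma; it simply cites the reference [LC2006], so there is nothing internal to compare against, but your argument is sound and self-contained. The three steps all check out: symmetry is immediate; the row-sum computation correctly uses both $L\mathbf 1=0$ and $L^{\top}\xi=0$; and the quadratic-form identity $x^{\top}(\Xi L+L^{\top}\Xi)x=\sum_{i,j}\xi_i a_{ij}(x_i-x_j)^2$ is valid, since the discrepancy between $2\sum_{i,j}\xi_i a_{ij}x_i(x_i-x_j)$ and the sum of squares is $\sum_{i,j}\xi_i a_{ij}(x_j^2-x_i^2)$, which vanishes precisely by the $\xi$-weighted balance relation $\sum_i\xi_i a_{ij}=\xi_j\sum_k a_{jk}$ that you extract from $\xi^{\top}L=0$. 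A slightly quicker route to the same conclusion, closer in spirit to the cited source, is to observe that $\Xi L+L^{\top}\Xi$ has nonpositive off-diagonal entries $-(\xi_i a_{ij}+\xi_j a_{ji})$ and zero row sums, so it is itself the Laplacian of an undirected weighted graph whose edge set contains that of $\mathcal G$ (ignoring orientation); connectivity of that graph then gives the simple zero eigenvalue by the standard undirected result. Your explicit sum-of-squares argument proves the same connectivity-forces-constancy step by hand, which is perfectly adequate.
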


Here we define some matrices, which will be used later. Let $R=[R_{ij}]_{i,j=1}^{m}$, where
$$R=\frac{1}{2}(\Xi L+L^{\top}\Xi)$$
Obviously, $R$ is positive
semi-definite. Denote the eigenvalue of $R$ by
$0=\lambda_{1}<\lambda_{2}\le\cdots\le\lambda_{m}$, counting the
multiplicities.

We also denote
$$U=\Xi-\xi\xi^{\top}$$

It can also be seen that $U$ has a simple zero eigenvalue and its
eigenvalues (counting the multiplicities) can be arranged as
$0=\mu_{1}<\mu_{2}\le\cdots\le\mu_{m}$. We also denote the eigenvalues of
$L^{T}L$ by
$0=\gamma_{1}<\gamma_{2}\le\cdots\le\gamma_{m}=\rho(L^{\top}L)$. Then, for
all $x\in R^{m}$ satisfying $x\bot 1$, we have
$$\lambda_{2}x^{\top}x\le x^{\top}Rx$$
and
$$x^{\top}UUx\le\mu_{m}^{2}x^{\top}x$$
Therefore, we have
\begin{align}
R\ge\frac{\lambda_{2}}{\mu_{m}^2} UU\label{RU}
\end{align}
\begin{align}
L^{T}L\ge\frac{\gamma_{2}}{\mu_{m}^{2}} UU\label{UULL}
\end{align}
and
\begin{align}
\frac{\lambda_m}{\gamma_{2}}L^{\top}L\ge R\ge
\frac{\lambda_2}{\rho(L^{\top}L)}L^{\top}L\label{RLL}
\end{align}

Pick weight function $\mu(t)>0$ satisfying $\frac{\dot{\mu}(t)}{\mu(t)}\le \beta$.

\section{Pull-based event-triggered principles}\label{pull}
In this section, we consider event-triggered control for multi-agent systems with directed and weighted topology.

{\bf Firstly}, we consider the case of irreducible $L$.

Denote
$q(t)=[q_{1}(t),\cdots,q_{m}(t)]^{\top}$,  where
$q_{i}(t)=-\sum_{j=1}^{m}L_{ij}x_{j}(t)$ and
$f(t)=[f_{1}(t),\cdots,f_{m}(t)]^{\top}$, where
$
f_{i}(t)=q_{i}(t_{k}^{i})-q_{i}(t),~t\in[t_k^{i},t_{k+1}^{i}),~k=1,2,...
$

To depict the trigger event, consider the following candidate Lyapunov function (see \cite{LC2006}):
\begin{align}
V(t)=\frac{1}{2}\sum_{i=1}^{m}\xi_{i}(x_{i}(t)-\bar{x}(t))^{2}
=\frac{1}{2}x^{\top}(t)Ux(t)\label{V}
\end{align}
where $\bar{x}(t)=\sum_{i=1}^{m}\xi_{i}x_{i}(t)$.

By the definition, we have
\begin{align}
\sum_{i=1}^{m}\xi_{i}(x_{i}(t)-\bar{x}(t))
=0
\end{align}
and due to $\xi^{\top} L=0$, we have
\begin{align}
\sum_{i=1}^{m}\xi_{i}L_{ij}x_{j}(t)
=0\end{align}


The derivative of $V(t)$ along (\ref{mg3}) is
\begin{align}
\frac{d}{dt}V(t)=&\sum_{i=1}^{m}\xi_{i}(x_{i}(t)-\bar{x}(t))(\dot{x}_{i}(t)-\dot{\bar{x}}(t))
\nonumber\\
=&\sum_{i=1}^{m}\xi_{i}(x_{i}(t)-\bar{x}(t))\dot{x}_{i}(t)
\nonumber\\=&-
\sum_{i=1}^{m}\xi_{i}(x_{i}(t)-\bar{x}(t))\sum_{j=1}^{m}L_{ij}x_{j}(t_k^{i})
\nonumber\\
=&\sum_{i=1}^{m}\xi_{i}(x_{i}(t)-\bar{x}(t))q_{i}(t_k^{i})\nonumber\\
=&\sum_{i=1}^{m}\xi_{i}(x_{i}(t)-\bar{x}(t))\left\{f_{i}(t)+q_{i}(t)
\right\}\nonumber\\
=&\sum_{i=1}^{m}\xi_{i}(x_{i}(t)-\bar{x}(t))[f_{i}(t)
-\sum_{j=1}^{m}L_{ij}x_{j}(t)]\nonumber\\
=&-\sum_{i=1}^{m}\sum_{j=1}^{m}x_{i}(t)\xi_{i}L_{ij}x_{j}(t)
+\sum_{i=1}^{m}\xi_{i}(x_{i}(t)-\bar{x}(t))f_{i}(t)\nonumber\\
=&-x^{\top}(t)Rx(t)+x^{\top}(t)Uf(t)\nonumber\\
\le&-x^{\top}(t)Rx(t)+\frac{a}{2}x^{\top}(t)UUx(t)+\frac{1}{2a}f^{\top}(t)f(t)\nonumber\\
\le&-(1-\frac{a\mu_m^2}{2\lambda_2})x^{\top}(t)Rx(t)+\frac{1}{2a}f^{\top}(t)f(t)\label{dV3.1}
\end{align}
By (\ref{RLL}), we have
\begin{align}
&\frac{d}{dt}V(t)\nonumber\\
\le&-(1-\frac{a\mu_m^2}{2\lambda_2})\frac{\lambda_2}{\rho(L^{\top}L)}x^{\top}(t)L^{\top}Lx(t)
+\frac{1}{2a}f^{\top}(t)f(t)\nonumber\\
=&-(1-\frac{a\mu_m^2}{2\lambda_2})\frac{\lambda_2}{\rho(L^{\top}L)}q^{\top}(t)q(t)
+\frac{1}{2a}f^{\top}(t)f(t)\nonumber\\
=&\sum_{i=1}^{m}[-(1-\frac{a\mu_m^2}{2\lambda_2})\frac{\lambda_2}{\rho(L^{\top}L)}q_{i}^{2}(t)
+\frac{1}{2a}(q_{i}(t^{i}_{k})-q_{i}(t))^{2}]\label{dV3.11}
\end{align}
and
\begin{align}
&\frac{d[\mu(t)V(t)]}{dt}=\mu(t)\dot{V}(t)+\dot{\mu}(t)V(t)\nonumber\\
\le&\sum_{i=1}^{m}\mu(t)\bigg\{\bigg[-(1-\frac{a\mu_m^2}{2\lambda_2})
\frac{\lambda_2}{\rho(L^{\top}L)}+\frac{\gamma_{2}\dot{\mu}(t)}{\mu_{m}\mu(t)}\bigg]
q_{i}^{2}(t)\nonumber\\
&+\frac{1}{2a}(q_{i}(t^{i}_{k})-q_{i}(t))^{2}\bigg\}
\end{align}
Therefore, we have
\begin{theorem}\label{coro3.1}
Suppose that $\mathcal G$ is strongly connected. $\frac{\dot{\mu}(t)}{\mu(t)}\le \beta(t)$. For $i=1,\cdots,m,$, set $0<a<\frac{2\lambda_2}{\mu_m^2}$, and 
$$b(t)=(1-\frac{a\mu_m^2}{2\lambda_2})\frac{\lambda_2}{\rho(L^{\top}L)}
-\frac{\gamma_{2}\beta(t)}{\mu_{m}}>0$$
\begin{align}
t_{k+1}^{i}=\max_{\tau\ge t_{k}^{i}}\bigg\{&\tau:~\Big|q_{i}(t^{i}_{k})-q_{i}(t)\Big|\nonumber\\
&\le\sqrt{2ab(t)}\Big|q_{i}(t)\Big|,~\forall
t\in[t_{k}^{i},\tau]\bigg\}\label{event3.11}
\end{align}
Then, system (\ref{mg3}) reaches a consensus
\begin{align}
x_{i}(t)-\sum_{j=1}^{m}\xi_{j}x_{j}(t)=
O\bigg(\mu^{-1/2}(t)\bigg)
\end{align}
In addition, for all
$i\in\mathcal I$, we have
and
\begin{align}
\dot{x}_{i}(t)=
O\bigg(\mu^{-1/2}(t)\bigg)
\end{align}
\end{theorem}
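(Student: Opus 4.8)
The plan is to feed the triggering rule (\ref{event3.11}) into the estimate for $\frac{d[\mu(t)V(t)]}{dt}$ already derived just above the theorem statement, and then integrate. First I would observe that for every $i$ and every $t\in[t_k^i,t_{k+1}^i)$ the rule (\ref{event3.11}) guarantees $\big(q_i(t_k^i)-q_i(t)\big)^2\le 2ab(t)\,q_i^2(t)$, hence $\frac{1}{2a}\big(q_i(t_k^i)-q_i(t)\big)^2\le b(t)\,q_i^2(t)$. Combining this with $\frac{\dot\mu(t)}{\mu(t)}\le\beta(t)$ inside the bracket multiplying $q_i^2(t)$, the coefficient of $q_i^2(t)$ becomes $-(1-\frac{a\mu_m^2}{2\lambda_2})\frac{\lambda_2}{\rho(L^\top L)}+\frac{\gamma_2\beta(t)}{\mu_m}+b(t)$, which is exactly zero by the definition of $b(t)$. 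Hence $\frac{d[\mu(t)V(t)]}{dt}\le 0$ on each inter-event interval, and since $\mu(t)V(t)$ is continuous, it is nonincreasing on $[0,\infty)$.

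Next I would integrate to obtain $\mu(t)V(t)\le\mu(0)V(0)$, i.e. $V(t)\le \mu(0)V(0)/\mu(t)$. Because $V(t)=\frac12\sum_{i=1}^m\xi_i\big(x_i(t)-\bar x(t)\big)^2\ge \frac12(\min_i\xi_i)\,\|x(t)-\bar x(t)\mathbf 1\|^2$ with $\min_i\xi_i>0$ by Lemma \ref{lem2}, this yields $\|x(t)-\bar x(t)\mathbf 1\|^2=O(\mu^{-1}(t))$, and therefore $x_i(t)-\sum_{j=1}^m\xi_j x_j(t)=O(\mu^{-1/2}(t))$, which is the first claim.

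For the statement on $\dot x_i(t)$, I would use $L\mathbf 1=0$ to rewrite $q_i(t)=-\sum_{j=1}^m L_{ij}\big(x_j(t)-\bar x(t)\big)$, so that $|q_i(t)|\le\|L\|\,\|x(t)-\bar x(t)\mathbf 1\|=O(\mu^{-1/2}(t))$ by the previous step. The triggering rule then gives $|q_i(t_k^i)|\le\big(1+\sqrt{2ab(t)}\big)|q_i(t)|$, so $q_i(t_k^i)=O(\mu^{-1/2}(t))$ provided $b(t)$ stays bounded; since $\dot x_i(t)=u_i(t)=q_i(t_k^i)$ on $[t_k^i,t_{k+1}^i)$, the second claim follows.

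The main obstacle is not the Lyapunov chain, which is essentially algebraic once (\ref{event3.11}) is inserted, but rather the well-posedness of the closed loop: I must rule out Zeno behaviour so that the solution and the nonincreasing-$\mu V$ argument are valid on all of $[0,\infty)$. This amounts to exhibiting a strictly positive lower bound on the inter-event times $t_{k+1}^i-t_k^i$, which I would obtain by estimating how fast $|q_i(t_k^i)-q_i(t)|$ can grow immediately after a trigger relative to the threshold $\sqrt{2ab(t)}\,|q_i(t)|$. The delicate points are controlling this ratio when $q_i(t)$ is small and handling the degenerate case $q_i\equiv 0$ on an interval, in which the agent simply never needs to re-trigger.
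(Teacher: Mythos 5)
Your argument is essentially the paper's own proof: you insert the trigger rule (\ref{event3.11}) into the already-derived bound on $\frac{d[\mu(t)V(t)]}{dt}$ so that the coefficient of $q_i^2(t)$ becomes nonpositive (by the definition of $b(t)$ and $\dot\mu/\mu\le\beta(t)$), integrate to get $V(t)\le\mu(0)V(0)\mu^{-1}(t)$, and read off the two $O(\mu^{-1/2}(t))$ estimates. Your bound on $\dot x_i$ via $|q_i(t_k^i)|\le(1+\sqrt{2ab(t)})|q_i(t)|$ is a minor, arguably cleaner, variant of the paper's use of the zero-row-sum property, and your deferral of the Zeno/well-posedness issue matches the paper, which likewise handles it only in Remark 1 and the subsequent theorems rather than inside this proof.
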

\begin{proof} Combining inequalities (\ref{dV3.11}), (\ref{event3.11}) and (\ref{RLL}), we have
\begin{align*}
\frac{d[\mu(t)V(t)]}{dt}\le 0
\end{align*}
for all $t\ge0$. It means
\begin{align*}
V(t)\le \mu(0)\mu^{-1}(t)
\end{align*}
This implies that system (\ref{mg3}) reaches consensus and for all
$i=1,\cdots,m,$
\begin{align*}
x_{i}(t)-\sum_{j=1}^{m}\xi_{j}x_{j}(t)=
O\bigg(\mu^{-1/2}(t)\bigg)
\end{align*}
and
\begin{align*}
\dot{x}_{i}(t)=-\sum_{j=1}^{m}L_{ij}\bigg(x_{j}(t_{k_{i}(t)}^{i})
-\bar{x}(t_{k_{i}(t)}^{i})\bigg)=
O\bigg(\mu^{-1/2}(t)\bigg)
\end{align*}
This completes the proof.
\end{proof}

As special cases, we have
\begin{corollary}\label{coro3.1}
Suppose that $\mathcal G$ is strongly connected. $\mu(t)=e^{\beta t}$. For $i=1,\cdots,m,$, set $0<a<\frac{2\lambda_2}{\mu_m^2}$, and 
$$b=(1-\frac{a\mu_m^2}{2\lambda_2})\frac{\lambda_2}{\rho(L^{\top}L)}
-\frac{\gamma_{2}\beta}{\mu_{m}}>0$$
\begin{align}
t_{k+1}^{i}=\max_{\tau\ge t_{k}^{i}}\bigg\{&\tau:~\Big|q_{i}(t^{i}_{k})-q_{i}(t)\Big|\nonumber\\
&\le\sqrt{2ab}\Big|q_{i}(t)\Big|,~\forall
t\in[t_{k}^{i},\tau]\bigg\}\label{event3.11}
\end{align}
Then, system (\ref{mg3}) reaches a consensus
\begin{align}
x_{i}(t)-\sum_{j=1}^{m}\xi_{j}x_{j}(t)=
O\bigg(e^{-\beta t/2}\bigg)
\end{align}
In addition, for all
$i\in\mathcal I$, we have
\begin{align}
\dot{x}_{i}(t)=
O(e^{-\beta t/2})
\end{align}
\end{corollary}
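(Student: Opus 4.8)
The plan is to obtain this statement as a direct specialization of Theorem~1 to the exponential weight $\mu(t)=e^{\beta t}$, so that the time-varying estimates established there collapse into the single clean exponential rate claimed here. The whole content of the corollary is bookkeeping: once I check that $\mu(t)=e^{\beta t}$ meets the hypotheses of the theorem, the conclusions transfer by substitution.

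First I would verify the hypotheses of Theorem~1 for this choice of $\mu$. Since $\dot{\mu}(t)/\mu(t)=\beta$ is constant, the requirement $\dot{\mu}(t)/\mu(t)\le\beta(t)$ is met with the constant choice $\beta(t)\equiv\beta$, and then the quantity $b(t)$ of the theorem reduces to the constant $b=(1-\frac{a\mu_m^2}{2\lambda_2})\frac{\lambda_2}{\rho(L^{\top}L)}-\frac{\gamma_2\beta}{\mu_m}$, which is exactly the $b$ assumed positive in the statement. Hence the stated event condition, with this constant $b$, coincides with the triggering rule required by Theorem~1, and the theorem applies verbatim. I would then invoke its energy estimate $V(t)\le\mu(0)\mu^{-1}(t)$; with $\mu(0)=1$ and $\mu^{-1}(t)=e^{-\beta t}$ this becomes $V(t)\le e^{-\beta t}$. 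Recalling the Lyapunov function (\ref{V}), namely $V(t)=\frac{1}{2}\sum_{i=1}^{m}\xi_i(x_i(t)-\bar{x}(t))^2$ with $\bar{x}(t)=\sum_{j=1}^{m}\xi_j x_j(t)$, and using that every $\xi_i>0$ by Lemma~\ref{lem2}, each summand is dominated by $2V(t)$, so $(x_i(t)-\bar{x}(t))^2\le\frac{2}{\xi_i}e^{-\beta t}$ and therefore $x_i(t)-\sum_{j=1}^{m}\xi_j x_j(t)=O(e^{-\beta t/2})$.

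The derivative estimate follows in the same way: Theorem~1 already provides $\dot{x}_i(t)=O(\mu^{-1/2}(t))$, and substituting $\mu^{-1/2}(t)=e^{-\beta t/2}$ gives $\dot{x}_i(t)=O(e^{-\beta t/2})$. There is no genuine obstacle beyond this translation, since the result is a corollary; the only two points worth confirming are that the constant-$b$ triggering condition truly matches the theorem's time-varying condition once $\mu$ is exponential, and that passing from the Lyapunov bound to a componentwise bound costs only the harmless factor $\sqrt{2/\xi_i}$. Both are immediate from the positivity of the left Perron vector $\xi$ guaranteed by Lemma~\ref{lem2}.
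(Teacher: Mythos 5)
Your proposal is correct and follows exactly the route the paper intends: the paper states this corollary as a ``special case'' of Theorem~1 with no separate proof, and your specialization to $\mu(t)=e^{\beta t}$ (constant $\beta(t)\equiv\beta$, hence constant $b$, then $V(t)\le e^{-\beta t}$ and the componentwise bound via positivity of $\xi_i$) is precisely that reduction. No gaps.
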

\begin{corollary}
Suppose that $\mathcal G$ is strongly connected. Set 
\begin{align}
t_{k+1}^{i}=\max_{\tau\ge t_{k}^{i}}\bigg\{&\tau:~\Big|q_{i}(t^{j}_{k})-q_{i}(t)\Big|\nonumber\\
&\le c\Big|q_{i}(t)\Big|,~\forall
t\in[t_{k}^{i},\tau]\bigg\}
\end{align}
or
\begin{align}
t_{k+1}^{i}=\max_{\tau\ge
t_{k}^{i}}\bigg\{&\tau:~\frac{|q_{i}(t^{j}_{k})|}{1+c}
\le\Big|q_{i}(t)\Big|\nonumber\\
&\le \frac{|q_{i}(t^{j}_{k})|}{1-c},~\forall t\in[t_{k}^{i},\tau]\bigg\}
\end{align}
for some sufficient small constant $c$.
Then, system (\ref{mg3}) reaches a consensus.

\end{corollary}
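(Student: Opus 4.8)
The plan is to reduce \emph{both} triggering rules to the one already analysed in Corollary~1 (the case $\mu(t)=e^{\beta t}$ of Theorem~1), so that the Lyapunov estimate (\ref{dV3.11}) can be reused without change. The guiding observation is that Corollary~1 only ever uses the pointwise inequality $(q_i(t_k^i)-q_i(t))^2\le 2ab\,q_i^2(t)$, valid on each inter-event interval, to collapse the right-hand side of (\ref{dV3.11}) into $\frac{d[\mu(t)V(t)]}{dt}\le0$; any triggering rule that guarantees this same inequality therefore yields consensus.

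For the first rule I would simply note that, with $c=\sqrt{2ab}$, it \emph{is} the rule (\ref{event3.11}). So it suffices to check that for every sufficiently small $c>0$ one can choose admissible parameters with $\sqrt{2ab}=c$, $b>0$ and $\beta>0$. I would fix $a=\lambda_2/\mu_m^2\in(0,2\lambda_2/\mu_m^2)$, which gives $1-\frac{a\mu_m^2}{2\lambda_2}=\frac{1}{2}$; then $2ab=c^2$ forces $b=c^2\mu_m^2/(2\lambda_2)>0$, and the definition of $b$ forces $\frac{\gamma_2\beta}{\mu_m}=\frac{\lambda_2}{2\rho(L^{\top}L)}-\frac{c^2\mu_m^2}{2\lambda_2}$, whose right-hand side is positive once $c^2<\lambda_2^2/(\rho(L^{\top}L)\mu_m^2)$. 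Hence $\beta>0$ and $b>0$ hold together and Corollary~1 applies verbatim.

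For the second rule I would show it enforces the very same bound $|q_i(t_k^i)-q_i(t)|\le c|q_i(t)|$ on each interval $[t_k^i,t_{k+1}^i)$. The key step is sign preservation: as long as the rule holds we have $|q_i(t)|\ge |q_i(t_k^i)|/(1+c)>0$ (for $q_i(t_k^i)\neq0$), so by continuity $q_i(t)$ cannot vanish and keeps the sign of $q_i(t_k^i)$. Working with this common sign, the two-sided constraint $\frac{|q_i(t_k^i)|}{1+c}\le|q_i(t)|\le\frac{|q_i(t_k^i)|}{1-c}$ is elementarily equivalent, in both subcases $|q_i(t)|\ge|q_i(t_k^i)|$ and $|q_i(t)|\le|q_i(t_k^i)|$, to $|q_i(t_k^i)-q_i(t)|\le c|q_i(t)|$. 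With this bound the analysis of the first rule applies unchanged, so $\mu(t)V(t)$ is non-increasing and $V(t)\to0$, i.e.\ consensus holds.

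I expect the only genuinely delicate point to be this sign-preservation argument for the second rule: without the strict lower bound $|q_i(t)|\ge|q_i(t_k^i)|/(1+c)>0$ keeping $q_i$ away from zero, the triangle inequality alone yields only the useless $|q_i(t_k^i)-q_i(t)|\le|q_i(t_k^i)|+|q_i(t)|$, and the equivalence with the first rule would fail. I would also dispatch the degenerate coordinate $q_i(t_k^i)=0$ separately: for $c<1$ either rule then forces $q_i\equiv0$ on the interval, which is consistent with (\ref{dV3.11}) and contributes nothing to $\dot V$.
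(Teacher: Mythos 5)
Your proposal is correct and follows the same route the paper takes: the paper's entire justification is the one-line remark that ``Theorem 1 shows such a constant $c$ does exist,'' i.e., the first rule is rule (\ref{event3.11}) with $c=\sqrt{2ab}$, which is exactly your reduction. You go further than the paper in two useful ways---the explicit verification that every sufficiently small $c$ (namely $c^{2}<\lambda_2^{2}/(\rho(L^{\top}L)\mu_m^{2})$ with $a=\lambda_2/\mu_m^{2}$) admits consistent choices $a,b,\beta>0$, and the sign-preservation argument showing the two-sided rule is pointwise equivalent to the first one---neither of which appears in the paper, and the second of which is genuinely needed, since without the lower bound keeping $q_i$ away from zero the triangle inequality alone would not yield the reduction.
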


Theorem 1 shows such a constant $c$ does exist.

\begin{remark}
To utilize event-triggering algorithm, two issues should be addressed. Firstly, for any initial condition, at any time $t\ge 0$, under the condition and
the event-triggered principle in Theorem 1, there exists at least one agent
$v_{j_{1}}$, of which the next inter-event time is strictly positive before
consensus is reached.

In fact, suppose that there is no trigger event when $t>T$. Then, we have
\begin{eqnarray}
\dot{x}_{i}(t) =\sum_{j=1}^{m}L_{ij}x_{j}(T_{k_{i}(T)}^{i}),~t>T,~i=\onetom
\end{eqnarray}
which implies
\begin{eqnarray*}
x_{i}(t)-x_{i}(T) =(t-T)\sum_{j=1}^{m}L_{ij}x_{j}(T_{k_{i}(T)}^{i}).
\end{eqnarray*}
By Theorem 1, we have $x_{i_{1}}(t)-x_{i_{2}}(t)\rightarrow 0$. Therefore, for all
$i_{1},i_{2}=\onetom$, we have
\begin{eqnarray*}
\sum_{j=1}^{m}L_{i_{1}j}x_{j}(T_{k_{i_{1}}(T)}^{i_{1}})=
\sum_{j=1}^{m}L_{i_{2}j}x_{j}(T_{k_{i_{2}}(T)}^{i_{2}})
\end{eqnarray*}
and
\begin{eqnarray*}
x_{i_{1}}(T)=x_{i_{2}}(T)
\end{eqnarray*}
which implies $x_{i_{1}}(t)=x_{i_{2}}(t)$ for all $t\ge T$ and $i_{1},i_{2}=\onetom$. It
means that in case there is no triggering time for $t>T$, the consensus has
reached at time $T$.

Secondly, it should be addressed that in any finite interval $[t_{1},t_{2}$, there are only finite triggers. It would be discussed in the following algorithms.
\end{remark}

In the following, we propose another event-triggering setting.

Denote $\delta x_{i}(t)=x_{i}(t)-\bar{x}(t)$, and rewrite
\begin{align}
\frac{d}{dt}V(t)=&\sum_{i=1}^{m}\xi_{i}(x_{i}(t)-\bar{x}(t))[f_{i}(t)
-\sum_{j=1}^{m}L_{ij}x_{j}(t)]\nonumber\\
=&-\sum_{i=1}^{m}\sum_{j=1}^{m}\delta x_{i}(t)\xi_{i}L_{ij}\delta x_{j}(t)
+\sum_{i=1}^{m}\xi_{i}\delta x_{i}(t)f_{i}(t)\nonumber\\
\le&(-\frac{\lambda_{2}}{\max\{\xi_{i}\}}+\frac{a}{2})\sum_{i=1}^{m}\xi_{i}(\delta x_{i}(t))^{2}
+\frac{1}{2a}\sum_{i=1}^{m}\xi_{i}(f_{i}(t))^{2}
\end{align}
and
\begin{align}
&\frac{d[\mu(t)V(t)]}{dt}\nonumber\\
\le&\bigg(-\frac{\lambda_{2}}{\max\{\xi_{i}\}}+\frac{a}{2}
+\frac{\dot{\mu}(t)}{\mu(t)}\bigg)\mu(t)V(t)
+\frac{\mu(t)}{2a}\sum_{i=1}^{m}\xi_{i}(f_{i}(t))^{2}
\end{align}

Firstly, we give a simple lemma.
\begin{lemma}
Suppose a function $V_{1}(t)$ with $V_{1}(0)>0$ and satisfies
$$\dot{V}_{1}(t)\le -c_{1}V_{1}(t)+c_{2}$$
for some constants $c_{1}>0$ and $c_{2}>0$. Then, $V_{1}(t)$ is bounded.
\end{lemma}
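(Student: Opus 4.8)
The plan is to handle this differential inequality with a standard integrating-factor (Grönwall-type) comparison argument, reducing the desired bound on $V_1$ to an explicit bound on the solution of the associated linear ODE. First I would multiply the hypothesis $\dot{V}_{1}(t)\le -c_{1}V_{1}(t)+c_{2}$ by the strictly positive factor $e^{c_{1}t}$, which turns the left-hand side into a total derivative:
$$\frac{d}{dt}\big(e^{c_{1}t}V_{1}(t)\big)=e^{c_{1}t}\big(\dot{V}_{1}(t)+c_{1}V_{1}(t)\big)\le c_{2}\,e^{c_{1}t}.$$

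Next I would integrate this over $[0,t]$. This yields $e^{c_{1}t}V_{1}(t)-V_{1}(0)\le \frac{c_{2}}{c_{1}}\big(e^{c_{1}t}-1\big)$, and after dividing by $e^{c_{1}t}>0$,
$$V_{1}(t)\le e^{-c_{1}t}V_{1}(0)+\frac{c_{2}}{c_{1}}\big(1-e^{-c_{1}t}\big).$$
Since $c_{1}>0$, both $e^{-c_{1}t}$ and $1-e^{-c_{1}t}$ lie in $[0,1]$ for every $t\ge 0$, so the right-hand side is a convex combination of $V_{1}(0)$ and $c_{2}/c_{1}$. Hence $V_{1}(t)\le\max\{V_{1}(0),\,c_{2}/c_{1}\}$ for all $t\ge 0$, which is the desired uniform bound; in particular $\limsup_{t\to\infty}V_{1}(t)\le c_{2}/c_{1}$.

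I do not expect any genuine obstacle here, as the result is elementary; the only point requiring a word of care is the interpretation of the differential inequality. If $V_{1}$ is assumed to satisfy $\dot{V}_{1}\le -c_{1}V_{1}+c_{2}$ only in a weak or upper-Dini sense rather than being classically differentiable, I would instead invoke the comparison principle directly: compare $V_{1}$ with the solution $W(t)=(V_{1}(0)-c_{2}/c_{1})e^{-c_{1}t}+c_{2}/c_{1}$ of the equality ODE $\dot{W}=-c_{1}W+c_{2}$ with $W(0)=V_{1}(0)$, and conclude $V_{1}(t)\le W(t)$, which is manifestly bounded. Either route yields the claim.
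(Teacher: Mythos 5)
Your proof is correct, but it takes a different route from the paper's. The paper disposes of this lemma in one sentence with a barrier argument: if $V_{1}(t)>c_{2}/c_{1}$ then $\dot{V}_{1}(t)\le -c_{1}V_{1}(t)+c_{2}<0$, so $V_{1}$ cannot grow past $\max\{V_{1}(0),c_{2}/c_{1}\}$. You instead run the integrating-factor (Gr\"onwall) computation and obtain the explicit estimate $V_{1}(t)\le e^{-c_{1}t}V_{1}(0)+\frac{c_{2}}{c_{1}}\bigl(1-e^{-c_{1}t}\bigr)$. The paper's version buys brevity; yours buys strictly more information, namely the convergence rate and the asymptotic bound $\limsup_{t\to\infty}V_{1}(t)\le c_{2}/c_{1}$. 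That extra conclusion is in fact what the paper silently relies on immediately afterwards (in the proof of Theorem 2 it asserts $\mu(t)V(t)\le \frac{1}{2a(\lambda_{2}/\max\{\xi_{i}\}-a/2-\beta)}$ for sufficiently large $t$, which is exactly the $c_{2}/c_{1}$ asymptotic bound rather than mere boundedness), so your quantitative form is arguably the more useful one to record. Your closing remark about the weak-derivative case and the comparison principle is a reasonable precaution but not needed here, since $V(t)$ in the application is a quadratic form in a piecewise-$C^{1}$ trajectory.
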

In fact, if $V_{1}(t)>c_{2}/c_{1}$, then $\dot{V}_{1}(t)<0$.

\begin{theorem}
Suppose that $\mathcal G$ is strongly connected, function $\mu(t)>0$ satisfies $\dot{\mu}(t)\le \beta \mu(t)$ for some $\beta>0$ and
$$-\frac{\lambda_{2}}{\max\{\xi_{i}\}}+\frac{a}{2}
+\beta<0$$ for some small numbers $a$ and $\beta$. For agent $v_i$, if trigger times $t^{i}_{1}=0,\cdots,t^{i}_{k}$ are known, then use the following trigger strategy to find $t^{i}_{k+1}$:
\begin{align}
t_{k+1}^{i}=\max\Big\{\tau\ge t_{k}^{i}:~&|q_{i}(t^{i}_{k})-q_{i}(t)|^{2}\le \mu^{-1}(t),\forall t\in[t_{k}^{i},\tau]\Big\}\label{event1.1ep}
\end{align}
Then, system (\ref{mg3}) reaches consensus
\begin{align}
|x_{i}(t)-\sum_{j=1}^{m}\xi_{j}x_{j}(t)|\le \frac{\mu^{-1/2}(t)}{\sqrt{2a\xi_{i}(\frac{\lambda_{2}}{\max\{\xi_{i}\}}-\frac{a}{2}
-\beta)}}
\end{align}
In addition, for all
$i\in\mathcal I$, we have
\begin{align}
\dot{x}_{i}(t)=
O(\mu^{-1/2}(t))
\end{align} and the Zeno behaviour could be excluded.

\end{theorem}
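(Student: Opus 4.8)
The plan is to feed the triggering rule (\ref{event1.1ep}) into the differential inequality for $\frac{d[\mu(t)V(t)]}{dt}$ displayed just before the scalar comparison Lemma, and thereby reduce the whole problem to that Lemma. First I would observe that the rule (\ref{event1.1ep}) is designed precisely so that $f_i(t)^2=(q_i(t_k^i)-q_i(t))^2\le\mu^{-1}(t)$ holds on every interval $[t_k^i,t_{k+1}^i)$, hence for all $t\ge0$. Substituting this together with $\dot\mu(t)/\mu(t)\le\beta$ and $\sum_{i=1}^m\xi_i=1$ collapses the forcing term $\frac{\mu(t)}{2a}\sum_i\xi_i f_i(t)^2$ to the constant $\frac{1}{2a}$, leaving
\begin{align*}
\frac{d[\mu(t)V(t)]}{dt}\le -c_1\,\mu(t)V(t)+\frac{1}{2a},\qquad c_1:=\frac{\lambda_2}{\max\{\xi_i\}}-\frac a2-\beta>0,
\end{align*}
which is exactly the hypothesis $\dot{V}_1\le-c_1V_1+c_2$ of the Lemma with $V_1(t)=\mu(t)V(t)$ and $c_2=\frac{1}{2a}$.

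By that Lemma $\mu(t)V(t)$ stays bounded; integrating the linear inequality gives $\mu(t)V(t)\le\max\{\mu(0)V(0),\,c_2/c_1\}$, so $V(t)\le\frac{1}{2ac_1}\mu^{-1}(t)$ once the exponentially decaying transient is absorbed. To convert this energy estimate into the stated componentwise bound I would use $V(t)=\frac12\sum_i\xi_i(x_i(t)-\bar x(t))^2$, so that for each fixed $i$, $\xi_i(x_i(t)-\bar x(t))^2\le 2V(t)$; since $\bar x(t)=\sum_j\xi_j x_j(t)$, solving for $|x_i(t)-\sum_j\xi_j x_j(t)|$ reproduces the claimed decay of order $\mu^{-1/2}(t)$, with the explicit constant read off from $c_1$, $a$ and $\xi_i$. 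For the derivative estimate I would write $\dot x_i(t)=q_i(t_{k_i(t)}^i)=q_i(t)+f_i(t)$: the triggering rule bounds $|f_i(t)|\le\mu^{-1/2}(t)$, while the zero row sums of $L$ give $q_i(t)=-\sum_j L_{ij}(x_j(t)-\bar x(t))$, a fixed linear combination of quantities already shown to be $O(\mu^{-1/2}(t))$; adding the two yields $\dot x_i(t)=O(\mu^{-1/2}(t))$.

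The main obstacle is excluding Zeno behaviour, and here the absolute threshold $\mu^{-1}(t)$ (rather than one proportional to $|q_i(t)|$) is what makes the argument go through. Fix any finite horizon $[0,T]$. On it the positive continuous function $\mu^{-1}(t)$ attains a positive minimum $\epsilon>0$, so the threshold is bounded away from zero. Between triggers $\dot f_i(t)=-\dot q_i(t)=\sum_j L_{ij}\dot x_j(t)$, and since every $\dot x_j$ is bounded on $[0,T]$ by the previous step, there is a constant $M$ with $|\dot f_i(t)|\le M$. Because $f_i$ is reset to zero at each triggering time of $v_i$ and grows no faster than $M$, it needs at least time $\sqrt\epsilon/M$ to reach the level $\sqrt{\mu^{-1}(t)}\ge\sqrt\epsilon$; hence consecutive triggering times of $v_i$ are separated by a positive amount, no accumulation occurs in $[0,T]$, and Zeno behaviour is excluded.

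I expect this last step to demand the most care. The bound $|\dot f_i|\le M$ must be made genuinely uniform, and it rests on the boundedness of all the $\dot x_j$, which in turn follows from the consensus estimate of the preceding paragraph; consequently the estimates have to be assembled in the right logical order, and one should check that the argument does not silently presuppose monotonicity of $\mu$ beyond the stated $\dot\mu\le\beta\mu$ (continuity and positivity of $\mu$ on each finite interval suffice for the minimum $\epsilon>0$ to exist).
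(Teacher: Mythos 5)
Your proposal follows essentially the same route as the paper's proof: substitute the trigger rule into the pre-derived inequality for $\frac{d[\mu(t)V(t)]}{dt}$ to get $\frac{d[\mu V]}{dt}\le -c_1\mu V+\frac{1}{2a}$, invoke the scalar comparison lemma to bound $\mu(t)V(t)$, read off the componentwise and derivative estimates from $V$, and exclude Zeno behaviour on each finite horizon $[0,T]$ by combining a uniform bound on $\dot q_i$ with the fact that the threshold $\mu^{-1}(t)$ is bounded away from zero there. The argument is correct (your splitting $\dot x_i=q_i(t)+f_i(t)$ for the derivative bound is a minor, harmless variation on the paper's evaluation at the trigger time), so no further comparison is needed.
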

\begin{proof} By previous derivations, it is clear that there are two constants $c_{1}>0$ and $c_{2}>0$ such that
\begin{align*}
\frac{d\{\mu(t)V(t)\}}{dt}\le -\bigg\{\frac{\lambda_{2}}{\max\{\xi_{i}\}}-\frac{a}{2}
-\beta\bigg\}\mu(t)V(t)+\frac{1}{2a}
\end{align*}
By Lemma 3, $\mu(t)V(t)$ are bounded. Therefore, for sufficient large $t$, we have
$$\mu(t)V(t)\le \frac{1}{2a(\frac{\lambda_{2}}{\max\{\xi_{i}\}}-\frac{a}{2}
-\beta)}$$
and
$$V(t)\le \frac{\mu^{-1}(t)}{2a(\frac{\lambda_{2}}{\max\{\xi_{i}\}}-\frac{a}{2}
-\beta)}$$
\begin{align}
|x_{i}(t)-\bar{x}(t)|\le \frac{\mu^{-1/2}(t)}{\sqrt{2a\xi_{i}(\frac{\lambda_{2}}{\max\{\xi_{i}\}}-\frac{a}{2}
-\beta)}}
\end{align}
In addition, for all
$i\in\mathcal I$ and $t\in [t_{k_{i}},t_{k_{i}+1}]$, we have
\begin{align*}
|\dot{x}_{i}(t)|=&|\sum_{j=1}^{m}L_{ij}\bigg(x_{j}(t_{k_{i}}^{i})
-\bar{x}(t_{k_{i}}^{i})\bigg)|\\
\le & 2L_{ii}\frac{\mu^{-1/2}(t_{k_{i}})}{\sqrt{2a\xi_{i}
(\frac{\lambda_{2}}{\max\{\xi_{i}\}}-\frac{a}{2}
-\beta)}}
\end{align*}

Furthermore, in any finite length interval $[0,T]$, $||\dot{q}_{i}(t)||^{2}\le M$, and
$\mu(t)$ is bounded. Thus, $M(t^{i}_{k+1}-t^{i}_{k})^{2}\ge|q_{i}(t^{i}_{k+1})-q_{i}(t^{i}_{k})|^{2}=\mu^{-1}(t^{i}_{k+1})$ is lower bounded. Then,
$$(t^{i}_{k+1}-t^{i}_{k})^{2}\ge \frac{\mu^{-1}(t^{i}_{k+1})}{M}\ge \frac{\mu^{-1}(T)}{M}$$
Therefore, in any finite length interval $[0,T]$, there are only finite triggers. It means that Zeno behavior is avoided.
\end{proof}
\begin{theorem}
Suppose that $\mathcal G$ is strongly connected, and
$$-\frac{\lambda_{2}}{\max\{\xi_{i}\}}+\frac{a}{2}
+\beta<0$$ for some small numbers $a$ and $\beta$. For agent $v_i$, if trigger times $t^{i}_{1}=0,\cdots,t^{i}_{k}$ are known, then use the following trigger strategy to find $t^{i}_{k+1}$:
\begin{align}
t_{k+1}^{i}=\max\Big\{\tau\ge t_{k}^{i}:~&|q_{i}(t^{i}_{k})-q_{i}(t)|^{2}\le e^{-\beta t},\forall t\in[t_{k}^{i},\tau]\Big\}\label{event1.1ep}
\end{align}
Then, system (\ref{mg3}) reaches consensus
\begin{align}
|x_{i}(t)-\bar{x}(t)|\le \frac{e^{-\beta t/2}}{\sqrt{2a\xi_{i}(\frac{\lambda_{2}}{\max\{\xi_{i}\}}-\frac{a}{2}
-\beta)}}
\end{align}
and the Zeno behaviour could be excluded; In addition,
$$\dot{x}_{j}(t)=O(e^{-\beta t/2})$$

\end{theorem}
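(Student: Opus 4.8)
The plan is to recognise this statement as the preceding theorem specialised to the explicit weight $\mu(t)=e^{\beta t}$, so that the bulk of the argument is already available and only a verification of hypotheses plus a reading-off of the conclusions in exponential form remains.

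First I would check that $\mu(t)=e^{\beta t}$ meets every standing requirement. Plainly $\mu(t)>0$, and $\dot\mu(t)=\beta e^{\beta t}=\beta\mu(t)$, so the growth bound $\dot\mu(t)\le\beta\mu(t)$ holds (with equality); moreover $\mu^{-1}(t)=e^{-\beta t}$, which makes the triggering rule of this theorem coincide with that of the preceding one, and the condition $-\frac{\lambda_2}{\max\{\xi_i\}}+\frac a2+\beta<0$ is unchanged. Consequently the Lyapunov estimate derived before applies verbatim, namely
\begin{align*}
\frac{d\{\mu(t)V(t)\}}{dt}\le-\Big(\frac{\lambda_2}{\max\{\xi_i\}}-\frac a2-\beta\Big)\mu(t)V(t)+\frac1{2a}.
\end{align*}
I would then apply the boundedness lemma (Lemma~3) to conclude that $\mu(t)V(t)$ stays bounded, whence $V(t)\le e^{-\beta t}/\big(2a(\frac{\lambda_2}{\max\{\xi_i\}}-\frac a2-\beta)\big)$ for large $t$. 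Since $V(t)=\frac12\sum_i\xi_i(x_i(t)-\bar x(t))^2$, a single summand is controlled by $V(t)$, which yields the stated bound on $|x_i(t)-\bar x(t)|$ with $\mu^{-1/2}(t)=e^{-\beta t/2}$; the derivative estimate $\dot x_j(t)=O(e^{-\beta t/2})$ then follows from $\dot x_i(t)=-\sum_{j}L_{ij}\big(x_j(t^i_{k_i(t)})-\bar x(t^i_{k_i(t)})\big)$ together with the exponential decay just obtained.

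Finally, for the exclusion of Zeno behaviour I would specialise the inter-event lower bound. On any finite interval $[0,T]$ the states stay bounded, so $\|\dot q_i(t)\|^2\le M$ for some $M$ (between triggers $\dot x$ is piecewise constant and $q=-Lx$); since a trigger fires exactly when the threshold is attained,
\begin{align*}
M(t^i_{k+1}-t^i_k)^2\ge|q_i(t^i_{k+1})-q_i(t^i_k)|^2=e^{-\beta t^i_{k+1}}\ge e^{-\beta T},
\end{align*}
so $(t^i_{k+1}-t^i_k)^2\ge e^{-\beta T}/M$ is uniformly bounded below and only finitely many triggers occur on $[0,T]$. I expect this last step to be the only genuine point requiring care: one must be sure that $\|\dot q_i\|$ is truly bounded on compact time intervals—this rests on the states remaining bounded, guaranteed by the monotone decay of $\mu(t)V(t)$—and that the borderline case $\dot\mu/\mu=\beta$ does not spoil the strict negativity in the Lyapunov inequality, which it does not because the hypothesis $-\frac{\lambda_2}{\max\{\xi_i\}}+\frac a2+\beta<0$ is strict.
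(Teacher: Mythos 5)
Your proposal is correct and takes essentially the same route as the paper: the paper states this theorem without a separate proof precisely because it is the preceding theorem (Theorem~3) specialised to $\mu(t)=e^{\beta t}$, and your verification of the hypotheses, the Lyapunov inequality combined with the boundedness lemma (Lemma~3), the resulting bound on $|x_i(t)-\bar x(t)|$ and on $\dot x_i(t)$, and the Zeno exclusion via $\|\dot q_i(t)\|^2\le M$ on compact intervals reproduce the paper's argument for Theorem~3 step for step.
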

\begin{remark}
(i) In Theorem 2, in order to determine the trigger times, each agent only needs its in-neighbours' state information, especially do not need any a priori knowledge of any global parameter. (ii) By a little more detail analysis, we can show that there exists a constant $c$ such that for each agent $v_i$, $t^i_{k+1}-t^i_{k}\ge c>0$. We omit detail proof here.
\end{remark}
\begin{remark}
By picking different function $\mu(t)$, we can obtain different convergence rate. It can be seen that if $\mu(t)$ increases fast, then the interval $t^i_{k+1}-t^i_{k}$ can be larger, which means less triggers are needed. Instead, if $\mu(t)$ increases slowly, then the interval $t^i_{k+1}-t^i_{k}$ should be smaller, which means more triggers are needed.
\end{remark}
\begin{remark}
The event-triggered principle used  in Theorem 2 may be costly since each agent has to continuously send its state information to its out-neighbours. In the next section, we will give an algorithm to avoid this.
\end{remark}

\section{Distributed self-triggered principles}\label{sec4}

In this section, we extend the pull-based event-triggered principle
discussed in {\bf Section \ref{pull}} to self-triggered case in order to
avoid continuous communication between agents.

Self-triggered approach means that one can predict next triggering time $t_{k}^{i}$ based on the information at previous triggering time $t_{k}^{i}$.

Recall again the model
\begin{align*}
x^{i}(t_{k+1}^{i})=x^{i}(t_{k}^{i})+(t_{k+1}^{i}-t_{k}^{i})\sum_{j=1}^{m}a_{ij}x^{j}(t_{k}^{j})
\end{align*}
In centralized
control, the bound for $(t_{k+1}^{i}-t_{k}^{i})=(t_{k+1}-t_{k})$ to reach
consensus was given in that paper \cite{LC2004} when the coupling graph is
indirected (or in \cite{LC2007} for direct
graph), too. It means that the idea of self-triggering has been considered in these two papers.

For agent $v_{i}$, given $t^{i}_{1}=0,\cdots,t^{i}_{k}$,  its state at $t\in[t^{i}_{k},t^{i}_{k+1}]$ ($t^{i}_{k+1}$ to be determined) can be written as:
\begin{eqnarray}
x_{i}(t)=x_{i}(t^{i}_{k})+(t-t^{i}_{k})q_i(t^{i}_{k}),~t\in[t^{i}_{k},t^{i}_{k+1}]\label{mg13}
\end{eqnarray}
Since each agent $v_p\in N^{in}_i$ sends trigger information to agent $v_{i}$ whenever agent $v_p$ triggers, then at any given time point $r$, agent $v_i$ can predict agent $v_p$'s state at time $t\ge r$ as
\begin{eqnarray}
x_{p}(t)=x_{p}(t^{p}_{k_{p}(r)})+(t-t^{p}_{k_{p}(r)})q_p(t^{p}_{k_{p}(r)})\label{xixp4}
\end{eqnarray}
until agent $v_p$' next triggering after $s$.

Then, from Theorem 3, we have the following result
\begin{theorem}\label{thm4.11}
Suppose that $\mathcal G$ has spanning trees and $L$ is written in the form
of (\ref{PF}). For agent $v_i$, pick $\phi_i>0$, $\alpha_i>0$. If trigger times $t^{i}_{1}=0,\cdots,t^{i}_{k}$ are known, then use the following trigger strategy to find $t^{i}_{k+1}$:
\begin{enumerate}
\item At time $s=t^{i}_{k}$, substituting (\ref{mg13}) and (\ref{xixp4}) into (\ref{event3.21ep}), and solve the following maximizing problem to find out $\tau^{i}_{k+1}$:
    \begin{align}
\tau_{k+1}^{i}=\max\Big\{\tau\ge s:~&|q_{i}(t^{i}_{k})-q_{i}(t)|\le e^{-\beta t},\forall t\in[s,\tau]\Big\}\label{event3.21eps}
\end{align}
\item In case that some in-neighbours of agent $v_{i}$ triggers at time $t_0\in (s,\tau^{i}_{k+1})$, i.e., agent $v_{i}$ received the renewed information form some of its in-neighbours, then updating $s=t_0$ and go to step (1);
\item In case that any of $v_{i}$'s in-neighbours does not trigger during $(s,\tau^{i}_{k+1})$, then $v_{i}$ triggers at time $t^{i}_{k+1}=\tau^{i}_{k+1}$. The agent $v_{i}$ renews its state at $t=t^{i}_{k+1}$ and sends the renewed information, including $t^{i}_{k+1}$, $x_{i}(t^{i}_{k+1})$ and $q_{i}(t^{i}_{k+1})$, to all its out-neighbours immediately.
\end{enumerate}
then, system (\ref{mg3}) reaches consensus exponentially and the Zeno behaviour could be excluded.
\end{theorem}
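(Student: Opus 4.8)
The plan is to show that the self-triggered algorithm is an exact, communication-free implementation of the event-triggered law underlying the spanning-tree version of Theorem 3, so that consensus and Zeno exclusion can be inherited once the prediction step is shown to be exact. First I would verify that, on any interval on which neither agent $v_i$ nor any of its in-neighbours triggers, the predictions (\ref{mg13}) and (\ref{xixp4}) coincide with the true trajectories. This is immediate, because on such an interval every relevant derivative $\dot{x}_j(t) = q_j(t^j_{k_j(t)})$ is constant, so the linear extrapolation is exact; consequently $q_i(t) = -\sum_{j} L_{ij} x_j(t)$ is reconstructed exactly by $v_i$ from the data $\{t^j_{k_j}, x_j(t^j_{k_j}), q_j(t^j_{k_j})\}$ it has received. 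Step (2) re-initialises $s$ at precisely the instants where one of these derivatives changes (an in-neighbour triggers), so after each re-computation the prediction is again exact. Hence the value $\tau^i_{k+1}$ finally accepted in step (3) is exactly the first time after $t^i_k$ at which the true quantity $|q_i(t^i_k) - q_i(t)|$ reaches the threshold $e^{-\beta t}$.

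Second, having identified $t^i_{k+1}$ with the triggering time produced by the event-triggered condition (\ref{event3.21ep}), I would invoke the convergence analysis already established for that condition: the inequality $|q_i(t^i_k) - q_i(t)| \le e^{-\beta t}$ holds for every $t \in [t^i_k, t^i_{k+1}]$ and every $i$, which is exactly the hypothesis needed to make $\frac{d}{dt}[\mu(t) V(t)] \le -c_1 \mu(t) V(t) + c_2$ hold along the spanning-tree decomposition of $L$ written in the form (\ref{PF}). Applying Lemma 3 componentwise, starting from the root strongly connected component and propagating to the descendant components, then yields that $\mu(t) V(t)$ is bounded; with $\mu(t)$ growing exponentially this gives $|x_i(t) - \bar{x}(t)| = O(e^{-\beta t/2})$ and $\dot{x}_i(t) = O(e^{-\beta t/2})$, i.e. exponential consensus.

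Third, for the exclusion of Zeno behaviour I would establish a strictly positive lower bound on the genuine inter-event times $t^i_{k+1} - t^i_k$. Since $x(t)$ is continuous, $q_i(t)$ is continuous and $|q_i(t^i_k) - q_i(t^i_k)| = 0 < e^{-\beta t^i_k}$, so each inter-event time is positive; to make the bound uniform on a finite horizon $[0,T]$ I would use that consensus forces the states, and hence each $q_j(t^j_{k_j})$ together with the derivative $\dot{q}_i(t) = -\sum_j L_{ij} q_j(t^j_{k_j(t)})$, to be bounded by some constant $M$ on $[0,T]$, whence $|q_i(t^i_k) - q_i(t)| \le M(t - t^i_k)$ and the threshold cannot be crossed before $t - t^i_k \ge e^{-\beta T}/M$. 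This bounds the number of triggers of every agent on $[0,T]$, and therefore also the number of re-computations forced on $v_i$ by its in-neighbours in step (2), so the algorithm is well defined for all $t$ and no accumulation of triggering times occurs.

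The step I expect to be the main obstacle is this last one, because of an apparent circularity: the uniform lower bound on inter-event times is derived from the boundedness of the $q_j$'s, which in turn is guaranteed by the consensus estimate, while the consensus estimate presupposes that the event condition keeps being enforced for all $t$, i.e. that the scheme does not stall at a finite accumulation point. I would resolve this by a bootstrapping argument: fix a putative accumulation time $T^\ast$, run the Lyapunov estimate on $[0,T^\ast)$ to bound the states and the $q_j$'s up to $T^\ast$, derive the positive lower bound $e^{-\beta T^\ast}/M$ on inter-event times valid on a left-neighbourhood of $T^\ast$, and contradict the assumed accumulation. Additional care is needed to ensure that the re-computations triggered in step (2) by in-neighbour events do not themselves accumulate, which follows once the network-wide count of genuine triggers on $[0,T^\ast)$ is shown to be finite.
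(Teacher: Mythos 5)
Your proposal is correct and follows essentially the same route as the paper: between triggering instants the dynamics are piecewise linear, so the predictions (\ref{mg13})--(\ref{xixp4}) are exact and the self-triggered times coincide with the event-triggered times of rule (\ref{event3.21ep}), after which consensus follows from the SCC-by-SCC Lyapunov induction of the reducible event-triggered theorem and Zeno exclusion from the finite-interval bound $|q_i(t)-q_i(t^i_k)|\le M(t-t^i_k)$ against the threshold $e^{-\beta T}$. Your explicit bootstrapping argument to break the apparent circularity between the consensus estimate and the inter-event lower bound is a point the paper passes over silently, but it refines rather than replaces the paper's argument.
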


\begin{remark}
Obviously, Theorem \ref{thm4.11} can be regarded as an algorithm of Theorem 3, by which the continuous communications between different states can be avoided.
\end{remark}

{\bf Secondly}, we consider the case $L$ is reducible. The following
mathematical methods are inspired by \cite{Ctp}. By proper
permutation, we rewrite $L$ as the following Perron-Frobenius form:
\begin{eqnarray}
L=\left[\begin{array}{llll}L^{1,1}&L^{1,2}&\cdots&L^{1,K}\\
0&L^{2,2}&\cdots&L^{2,K}\\
\vdots&\vdots&\ddots&\vdots\\
0&0&\cdots&L^{K,K}
\end{array}\right]\label{PF}
\end{eqnarray}
where $L^{k,k}$ is with dimension $n_{k}$ and associated with the $k$-th
strongly connected component (SCC) of $\mathcal G$, denoted by $SCC_{k}$,
$k=\onetoK$. Accordingly, define
$x=[x{^{1}}^{T},\cdots,x{^{K}}^{T}]^{T}$, where
$x^{k}=[x_{1}^{k},\cdots,x_{n_{k}}^{k}]^{\top}$.

For agent $v_i\in SCC_k$, i.e., $i=M_{k-1}+1,\cdots,M_{k}$, where $M_0=0,~M_{k}=\sum_{i=1}^{k}n_{i}$, denote the combinational state measurement
$q^{k}_{i}(t)
=-\sum_{j=M_{k-1}+1}^{m}L_{i+M_{k-1},j}x_{j}(t)=-\sum_{j=1}^{m}L_{i+M_{k-1},j}x_{j}(t)=q_{i+M_{k-1}}(t)$. And denote
the combinational measurement error by
$f^{k}_{i}(t)=q^{k}_{i}(t_{l}^{i+M_{k-1}})-q^{k}_{i}(t)$ and $
u^{k}_{i}(t)=q^{k}_{i}(t_{l}^{i+M_{k-1}}),~t\in[t_l^{i+M_{k-1}},t_{l+1}^{i+M_{k-1}})$.

If $\mathcal G$ has spanning trees, then each $L^{k,k}$ is irreducible or
has one dimension and for each $k<K$, $L^{k,q}\ne 0$ for at least one
$q>k$. Define an auxiliary matrix
$\tilde{L}^{k,k}=[\tilde{L}^{k,k}_{ij}]_{i,j=1}^{n_{k}}$ as
\begin{eqnarray*}
\tilde{L}^{k,k}_{ij}=\begin{cases}L^{k,k}_{ij}&i\ne j\\
-\sum_{p=1,p\not=i}^{n_{k}}L^{k,k}_{ip}&i=j\end{cases}
\end{eqnarray*}
Then, let
$D^{k}=L^{k,k}-\tilde{L}^{k,k}=diag[D^{k}_{1},\cdots,D^{k}_{n_{k}}]$, which
is a diagonal semi-positive definite matrix and has at least one diagonal
positive (nonzero).

Let ${\xi^{k}}^{\top}$ be the positive left eigenvector of the irreducible
$\tilde{L}^{k,k}$ corresponding to the eigenvalue zero and has the sum of components equaling to $1$.
Denote $\Xi^{k}=diag[\xi^{k}]$. By the structure, it can be seen that
$R^{k}=\frac{1}{2}[\Xi^{k}\tilde{L}^{k,k}+(\Xi^{k}\tilde{L}^{k,k})^{\top}]$
has zero row sums and has zero eigenvalue with algebraic dimension one.
Then, we have
\begin{property}\label{p1}
Under the setup above, $Q^{k}=\frac{1}{2}[\Xi^{k}L^{k,k}+(\Xi^{k}L^{k,k})^{\top}]=R^{k}+\Xi^{k}D^{k}$ is positive definite and $\Xi^{k}\le\frac{\rho(\Xi^{k})}{\rho_2(Q^{k})}Q^{k}$ for all $k<K$.
\end{property}

And let $U^{K}=\Xi^{K}-\xi^{K}(\xi^{K})^{\top}$ and in order to facilitate the presentation, also denote $U^{k}=\Xi^{k},~k=1,\cdots,K-1$.

Now we are going to determine the triggering times for the system
(\ref{mg3}) to reach consensus. Firstly, applying Theorem 1 to the $K$-th
SCC, we can conclude that the $K$-th SCC can reach a consensus with the
agreement value $\nu(t)=\sum_{p=1}^{n_{K}}\xi^{K}_{p}x^{K}_{p}(t)$
and $\lim_{t\to\infty}\dot{\nu}(t)=0$ exponentially.

Then, inductively, consider the $K-1$-th SCC.
We will prove that $\lim_{t\to\infty}|x_{p}^{K-1}(t)-\nu(t)|=0$, for all $p=1,\cdots,n_{K-1}$.

Construct a candidate Lyapunov function as follows
\begin{align}
V_{K-1}(t)=\frac{1}{2}(x^{K-1}(t)-\nu(t){\bf
1})^{\top}\Xi^{K-1}(x^{K-1}(t)-\nu(t){\bf 1})\label{VK-1}
\end{align}

Differentiate $V_{K-1}(t)$ along (\ref{mg3}), we have
\begin{align}
&\frac{d}{dt}V_{K-1}(t)\nonumber\\
=&(x^{K-1}(t)-\nu(t){\bf
1})^{\top}\Xi^{K-1}\Big\{f^{K-1}(t)+q^{K-1}(t)-\dot{\nu}(t){\bf 1}\Big\}\nonumber\\
=&(x^{K-1}(t)-\nu(t){\bf
1})^{\top}\Xi^{K-1}\Big\{f^{K-1}(t)-\dot{\nu}(t){\bf 1}\nonumber\\
&-L^{K-1,K-1}(x^{K-1}(t)-\nu(t){\bf 1})-L^{K-1,K}(x^{K}(t)-\nu(t){\bf 1})\Big\}\nonumber\\
=&(x^{K-1}(t)-\nu(t){\bf
1})^{\top}\Xi^{K-1}f^{K-1}(t)
\nonumber\\
&-[x^{K-1}(t)-\nu(t){\bf 1}]^{\top}\Xi^{K-1}L^{K-1,K-1}[x^{K-1}(t)-\nu(t){\bf 1}]\nonumber\\
&-(x^{K-1}(t)-\nu(t){\bf
1})^{\top}\Xi^{K-1}\left\{L^{K-1,K}(x^{K}(t)-\nu(t){\bf 1})\right\}\nonumber\\
&-(x^{K-1}(t)-\nu(t){\bf
1})^{\top}\Xi^{K-1}\left\{\dot{\nu}(t){\bf 1}\right\}
\nonumber\\
=&W^{K-1}_{0}(t)-W^{K-1}_{1}(t)-W^{K-1}_{2}(t)-W^{K-1}_{3}(t)\label{dV3K-10}
\end{align}
where
\begin{align}
&W^{K-1}_{0}(t)=(x^{K-1}(t)-\nu(t){\bf
1})^{\top}\Xi^{K-1}f^{K-1}(t)\nonumber\\
&\le a_{K-1}V_{K-1}(t)+\frac{1}{2a_{K-1}}\sum_{i=1}^{n_{K-1}}\xi_i^{K-1}[f_i^{K-1}]^2\label{K0}
\end{align}
with any $a_{K-1}>0$,
\begin{align}
&W^{K-1}_{1}(t)\nonumber\\
&=[x^{K-1}(t)-\nu(t){\bf 1}]^{\top}\Xi^{K-1}L^{K-1,K-1}[x^{K-1}(t)-\nu(t){\bf 1}]\nonumber\\
&=[x^{K-1}(t)-\nu(t){\bf 1}]^{\top}Q^{K-1,K-1}[x^{K-1}(t)-\nu(t){\bf 1}]\label{K1}\\
&W^{K-1}_{2}(t)=(x^{K-1}(t)-\nu(t){\bf 1})^{\top}\Xi^{K-1}L^{K-1,K}(x^{K}(t)-\nu(t){\bf 1})\nonumber\\
&W^{K-1}_{3}(t)=(x^{K-1}(t)-\nu(t){\bf 1})^{\top}\Xi^{K-1}(\dot{\nu}(t){\bf 1})\nonumber
\end{align}

By Cauchy inequality, for any
$\upsilon^{K-1}_2>0,~\upsilon^{K-1}_3>0$, we have
\begin{align}
-W^{K-1}_{2}(t)\le \upsilon^{K-1}_2V_{K-1}(t)+F^{K-1}_{2}(t)\nonumber\\
-W^{K-1}_{3}(t)\le \upsilon^{K-1}_3V_{K-1}(t)+F^{K-1}_{3}(t)\label{W1}
\end{align}
where
\begin{align*}
F^{K-1}_{2}(t)&=\frac{1}{4\upsilon^{K-1}_2}\sum_{i=1}^{n_{K-1}}\xi_{i}^{K-1}
\left\{\sum_{p=1}^{n_{K}}L^{K-1,K}_{i,p}
[x^{K}_{p}(t)-\nu(t)]\right\}^{2}\\
F^{K-1}_{3}(t)&=\frac{1}{4\upsilon^{K-1}_2}\sum_{i=1}^{n_{K-1}}\xi_{i}^{K-1}[\dot{\nu}(t)]^2
=\frac{1}{2\upsilon^{K-1}_3}[\dot{\nu}(t)]^2
\end{align*}

According to the discussion of $SCC_{K}$ and Theorem \ref{coro1.1ep}, for all
$p=1,\cdots,n_{K}$, we have
$
\lim_{t\to\infty}x^{K}_{p}(t)-\nu(t)=0,~\lim_{t\to\infty}\dot{\nu}(t)=0
$
exponentially. Thus
\begin{align}\label{F2up}
\lim_{t\to\infty}F^{K-1}_{2}(t)=0,~\lim_{t\to\infty}F^{K-1}_{3}(t)=0
\end{align}
exponentially.

Thus, (\ref{dV3K-10}) can be rewritten as follows
\begin{align}
&\frac{d}{dt}V_{K-1}(t)
\le  a_{K-1}V_{K-1}(t)+\frac{1}{2a_{K-1}}\sum_{i=1}^{n_{K-1}}\xi_i^{K-1}[f_i^{K-1}]^2\nonumber\\
&-W^{K-1}_{1}(t)-W^{K-1}_{2}(t)-W^{K-1}_{3}(t)\nonumber\\
\le&-\bigg[1-\frac{a_{K-1}\rho(\Xi^{K-1})}{2\rho_2(Q^{K-1})}\bigg]W^{K-1}_{1}(t)-W^{K-1}_{2}(t)\nonumber\\
&+\frac{1}{2a_{K-1}}\sum_{i=1}^{n_{K-1}}\xi_i^{K-1}[f_i^{K-1}]^2-W^{K-1}_{3}(t)\label{dV3K-2}
\end{align}
Thus, we have
\begin{theorem}\label{coro3.2ep}
Suppose that $\mathcal G$ has spanning trees and $L$ is written in the form
of (\ref{PF}). For agent $v_i$, if trigger times $t^{i}_{1}=0,\cdots,t^{i}_{k}$ are known, then use the following trigger strategy to find $t^{i}_{k+1}$:
\begin{align}
t_{k+1}^{i}=\max\Big\{\tau\ge t_{k}^{i}:~&|q_{i}(t^{i}_{k})-q_{i}(t)|\le e^{-\beta t},\forall t\in[t_{k}^{i},\tau]\Big\}\label{event3.21ep}
\end{align}
system (\ref{mg3}) reaches consensus exponentially and the Zeno behaviour could be excluded.
\end{theorem}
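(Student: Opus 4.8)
\hspace{0.5em}
The plan is to argue by induction on the strongly connected components $SCC_1,\dots,SCC_K$ in the Perron--Frobenius ordering of (\ref{PF}), proving that each block $x^k(t)$ converges exponentially to the reference trajectory $\nu(t)=\sum_{p=1}^{n_K}\xi^K_p x^K_p(t)$ produced by the root component. The representative derivative estimate (\ref{dV3K-2}) for $k=K-1$ has already been assembled, so what remains is to turn such an estimate into an exponential decay bound, to run the induction from $k=K$ down to $k=1$, and to exclude Zeno behaviour.

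For the base case, observe that since row $K$ of (\ref{PF}) contains only its diagonal block, $SCC_K$ forms a closed irreducible (or scalar) subsystem under the trigger rule (\ref{event3.21ep}); applying the irreducible-case result of Section \ref{pull} (Theorem 3) then gives $x^K_p(t)-\nu(t)=O(e^{-\beta t/2})$ for all $p$ and $\dot\nu(t)=O(e^{-\beta t/2})$. For the inductive step I assume $x^q(t)-\nu(t)\mathbf{1}\to0$ exponentially for every $q>k$, and use $V_k(t)=\tfrac12(x^k(t)-\nu(t)\mathbf{1})^\top\Xi^k(x^k(t)-\nu(t)\mathbf{1})$ as in (\ref{VK-1}). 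The decisive ingredient is Property \ref{p1}: the spanning-tree hypothesis forces $L^{k,q}\ne0$ for some $q>k$, so the defect $D^k\ne0$ and $Q^k$ is \emph{positive definite}, whence $W_1^k(t)\ge\frac{2\rho_2(Q^k)}{\rho(\Xi^k)}V_k(t)$. Substituting this together with the Cauchy bounds (\ref{W1}) into (\ref{dV3K-2}), and choosing $a_k,\upsilon_2^k,\upsilon_3^k$ small enough that $a_k+\upsilon_2^k+\upsilon_3^k<\frac{2\rho_2(Q^k)}{\rho(\Xi^k)}$, I obtain
\begin{align*}
\frac{d}{dt}V_k(t)\le -c_1V_k(t)+g_k(t),\qquad c_1>0,
\end{align*}
with $g_k(t)=F_2^k(t)+F_3^k(t)+\frac{1}{2a_k}\sum_i\xi_i^k[f_i^k(t)]^2$, where $F_2^k$ now collects the couplings $L^{k,q}$ from all $q>k$.

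The point is that $g_k$ decays exponentially: the trigger rule (\ref{event3.21ep}) enforces $|f_i^k(t)|\le e^{-\beta t}$, so the last sum is $O(e^{-2\beta t})$, while (\ref{F2up}) and the inductive hypothesis give $F_2^k,F_3^k=O(e^{-\alpha t})$ for some $\alpha>0$. A comparison argument sharper than Lemma 3 --- integrating $\frac{d}{dt}(e^{c_1t}V_k)\le e^{c_1t}g_k(t)$ --- then yields $V_k(t)=O(e^{-\eta t})$ with $\eta=\min\{c_1,\alpha\}>0$ (up to a harmless polynomial factor when $c_1=\alpha$), i.e. $x^k(t)-\nu(t)\mathbf{1}\to0$ exponentially. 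Descending to $k=1$ shows every block tracks $\nu(t)$; since $\dot\nu\to0$ exponentially, $\nu(t)$ itself converges, so system (\ref{mg3}) reaches consensus exponentially, and the velocity estimates $\dot x_i(t)=O(e^{-\beta t/2})$ follow exactly as in the proof of Theorem 3.

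Zeno exclusion repeats the argument for Theorem 3: on any finite interval $[0,T]$ one has $\|\dot q_i(t)\|\le\sqrt M$, so at a triggering instant $\sqrt M\,(t^i_{k+1}-t^i_k)\ge|q_i(t^i_{k+1})-q_i(t^i_k)|=e^{-\beta t^i_{k+1}}\ge e^{-\beta T}$, giving a uniform positive lower bound on the inter-event times and hence finitely many triggers in $[0,T]$. I expect the \textbf{main obstacle} to be the inductive step, specifically showing that the off-diagonal blocks $L^{k,q}$, $q>k$, enter only as an exponentially vanishing forcing rather than a persistent disturbance; this is exactly where positive definiteness of $Q^k$ (Property \ref{p1}), supplied by the spanning-tree condition through $D^k\ne0$, is indispensable, since it furnishes the genuine decay rate $c_1$ that dominates the coupling and closes the induction.
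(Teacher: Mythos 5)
Your proposal follows essentially the same route as the paper's own proof: induction over the strongly connected components in the Perron--Frobenius ordering, with the root block $SCC_K$ handled by Theorem 3, the inductive step driven by the Lyapunov function $V_k$ of (\ref{VK-1}) together with the positive definiteness of $Q^k$ from Property \ref{p1}, the integrating-factor comparison $V_k(t)\le e^{-\varepsilon t}\{V_k(0)+\int_0^t e^{\varepsilon s}W_4(s)\,ds\}$ with an exponentially vanishing forcing term, and the same finite-interval bound on $\dot q_i$ to exclude Zeno behaviour. The argument is correct and matches the paper's proof in all essentials, with your explicit remark that $F_2^k$ must collect all off-diagonal couplings $L^{k,q}$, $q>k$, being a useful clarification of the step the paper leaves to ``similarly by induction.''
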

\begin{proof}
If $v_i\in K$-th SCC, the event-triggered rule (\ref{event3.21ep}) is the same
as (\ref{event1.1ep}) in Theorem 3, since $L$ is written in the
form of (\ref{PF}). By Theorem 3, we can conclude that under
the updating rule of $\{t^{j+M_{K-1}}_{l}\}$for all $j=1,\cdots,n_K$ and $\lim_{t\to\infty}\dot{\nu}(t)=0$, the subsystem restricted in $SCC_{K}$ reaches a consensus.
Additionally,
$\lim_{t\to\infty}|x^{K}_{i}(t)-\nu(t)|=0$
for all $i=1,\cdots,n_K$ and $\lim_{t\to\infty}\dot{\nu}(t)=0$ as well.

In the following, we are to prove that the state of the agent
$v_{p+M_{K-2}}\in SCC_{K-1}$ converges to $\nu(t)$. The remaining can be
proved similarly by induction.

From (\ref{dV3K-2}) and the inequality (\ref{event3.21ep}), we have
\begin{align*}
\frac{d}{dt}V_{K-1}(t)
\le&-[1-\frac{a_{K-1}\rho(\Xi^{K-1})}{2\rho_2(Q^{K-1})}\bigg]\frac{\rho_2(Q^{K-1})}{\rho(U^{K-1})}V_{K-1}(t)\nonumber\\
&+(\upsilon^{K-1}_2+\upsilon^{K-1}_3)V_{K-1}(t)+W^{K-1}_{4}(t)
\end{align*}
where
\begin{align*}
W^{K-1}_{4}(t)=F^{K-1}_{2}(t)+F^{K-1}_{3}(t)+\frac{1}{2a_{K-1}}\sum_{j=1}^{n_{K-1}}\xi_j^{K-1}\delta_j^2(t)
\end{align*}
Picking $a_{K-1}=\frac{\rho_2(Q^{K-1})}{\rho(\Xi^{K-1})}$ and sufficiently small $\upsilon^{K-1}_2$ and $\upsilon^{K-1}_3$, there exists some $\varepsilon_{K-1}>0$ such that
\begin{align*}
\frac{d}{dt}V_{K-1}(t)
\le-\varepsilon_{K-1} V_{K-1}(t)+W^{K-1}_{4}(t)
\end{align*}
Thus
\begin{align*}
V_{K-1}(t)
\le e^{-\varepsilon_{K-1}t}\bigg\{V_{K-1}(0)+\int_{0}^{t}e^{\varepsilon_{K-1}s}W^{K-1}_{4}(s)ds\bigg\}
\end{align*}
From (\ref{F2up}), we have $\lim_{t\to\infty}W^{K-1}_{4}(t)=0$ exponentially. Thus, we have $\lim_{t\to\infty}V_{K-1}(t)=0$ exponentially.This implies that system (\ref{mg3}) reaches a consensus and $\lim_{t\to\infty}|x_{p}^{K-1}(t)-\nu(t)|=0$ exponentially for all $p=1,\cdots,n_{K-1}$.

Similar to the proof in Theorem 3, we can prove that the Zeno behaviour can be excluded for agent $v_i\in K-1$-th SCC.

Then, we can complete the proof by induction to $SCC_{k}$ for $k<K-1$.
\end{proof}

\section{Examples}\label{sec5}
In this section, one numerical example is given to demonstrate the effectiveness of the presented results.

Consider a network of seven agents with a directed reducible Laplacian matrix
\begin{eqnarray*}
L=\left[\begin{array}{rrrrrrr}-12&0&5&2&5&0&0\\
3&-8&3&0&0&0&2\\
0&4&-12&3&0&5&0\\
0&0&6&-11&1&4&0\\
0&0&0&0&-7&2&5\\
0&0&0&0&5&-6&1\\
0&0&0&0&0&8&-8
\end{array}\right]
\end{eqnarray*}
with a spanning tree described by Figure \ref{fig:1}. The seven agents can be divided into two strongly connected components, i.e. the first four agents form a strongly connected component and the rest form anther. The initial value of each agent is also randomly selected within the interval $[-5,5]$ in our simulation. Figure \ref{fig:3} (a) shows the evolution of the Lyapunov function $V(t)=V_1(t)+V_2(t)$ (see (\ref{VK-1})), and Figure \ref{fig:3} (b) illustrates the trigger times of each agents under the self-triggered principles provided in Theorem \ref{thm4.11} with $\phi_i=20$ and $\alpha_i=1.5,~i=1,\cdots,7$, and initial value $[2.192,-3.699,-2.982,4.726,3.575,4.074,-3.424]^{\top}$. It can be seen that under the self-triggering principle in Theorem \ref{thm4.11}, $V(t)$ approaches 0 exponentially and the inter-event times of each agent are strictly bigger than some positive constants.

\begin{figure}[hbt]
\centering
\includegraphics[width=0.4\textwidth]{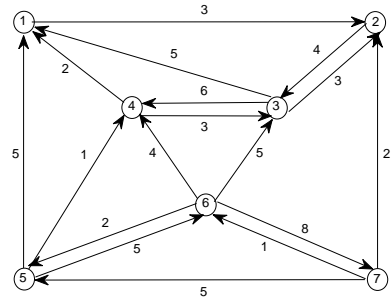}
\caption{The communication graph.}
\label{fig:1}
\end{figure}

\begin{figure}[hbt]
\centering
\includegraphics[width=0.51\textwidth]{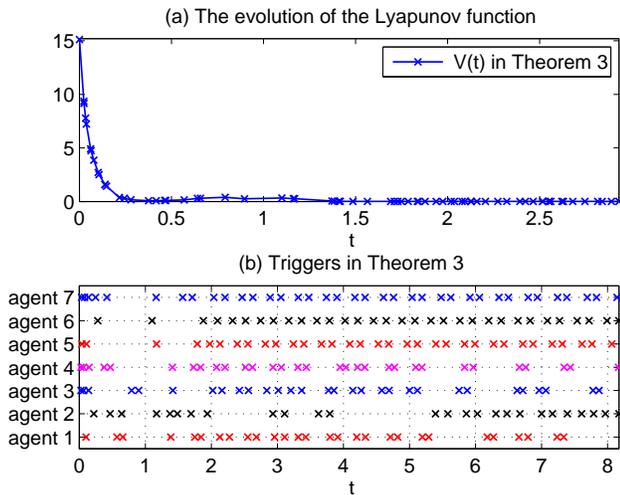}
\caption{The evolution of the Lyapunov function and the trigger times of each agents..}
\label{fig:3}
\end{figure}

\section{Conclusions}\label{sec6}
In this paper, we present distributed event-triggered and self-triggered
principles in for multi-agent systems with general directed topologies. We
derive pull-based event-triggered principles: in case the graph is reducible with a spanning tree, the triggering
time of each agent given by the inequality (\ref{event3.21ep}) only depends on
the states of each agent's in-neighbors. It is shown that with those
principles, consensus can be reached exponentially, and Zeno behavior can
be excluded. The results then are extended to discontinuous monitoring,
where each agent computes its next triggering time in advance without
having to observe the system��s state continuously. The effectiveness of the
theoretical results are verified by one numerical example.


\begin{thebibliography}{0}


\bibitem{Fcz}
{\sc F. Chen, Z. Chen, L. Xiang, et al.}, {\em  Reaching a consensus via
pinning control}, Automatica, vol. 45, no. 5, pp. 1215-1220, 2009.

\bibitem{Ros}
{\sc R. O. Saber, and R. M. Murray}, {\em Consensus Problems in Networks of Agents With
Switching Topology and Time-Delays}, IEEE Trans. Autom. Control, vol. 49, no. 9, pp. 1520--1533, Sep. 2004.


\bibitem{Lcy}
{\sc L. Cao, Y. F. Zheng, and Q. Zhou}, {\em A necessary and sufficient condition for consensus of continuous-time agents over undirected time-varying networks}, IEEE Trans. Autom. Control, vol. 56, no. 8, pp. 1915-1920, Aug. 2011.

\bibitem{Liubo}
{\sc B. Liu, W. L. Lu, and T. P. Chen}, {\em Consensus in networks of multiagents with switching topologies modeled as adapted stochastic processes}, SIAM J. Control optim., vol. 49, no. 1, pp. 227-253, 2011.

\bibitem{Pta}
{\sc P. Tabuada}, {\em Event-triggered real-time scheduling of stabilizing control
tasks}, IEEE Trans. Autom. Control, vol. 52, no. 9, pp. 1680-1685, Sep. 2007.

\bibitem{Wpm}
{\sc W.P.M.H. Heemels, J.H. Sandee, and P.P.J. Van Den Bosch}, {\em Analysis of event-driven controllers for linear systems}, Int. J. Control, vol. 81, no. 4, pp. 571-590, 2007.


\bibitem{Mmp}
{\sc M. J. Manuel, and P. Tabuada}, {\em Decentralized event-triggered control over wireless sensor/actuator networks}, IEEE Trans. Autom. Control, vol. 56, no. 10, pp. 2456-2461, Oct. 2011.

\bibitem{Xwm}
{\sc X.Wang, and M. D. Lemmon}, {\em Event-triggering distributed networked control systems}, IEEE Trans. Autom. Control, vol. 56, no. 3, pp. 586-601, Mar. 2011.

\bibitem{Crf}
{\sc  C. Persis, R. Sailer, and F. Wirth},
{\em Parsimonious event-triggered distributed control: A zeno free approach},
Automatica, vol. 49, no. 7, pp. 2116-124, 2013.


\bibitem{Aapt}
{\sc A. Anta, and P. Tabuada}, {\em Self-triggered stabilization of homogeneous control systems}, in Proc. Amer. Control Conf., 2008, pp. 4129-4134.

\bibitem{Mmtp}
{\sc M. Mazo, and P. Tabuada}, {\em On event-triggered and self-triggered control over sensor/actuator networks}, in Proc. 47th IEEE Conf. Decision Control, 2008, pp. 435-440.

\bibitem{Xwmd}
{\sc X.Wang and M. D. Lemmon}, {\em Self-triggered feedback control systems with finite-gain $\mathcal L_{2}$ stability}, IEEE Trans. Autom. Control, vol. 45, no. 3, pp. 452-467, Mar. 2009.

\bibitem{Mjm}
{\sc M. J. Manuel, A. Anta, and P. Tabuada}, {\em An ISS self-triggered implementation of linear controllers}, Automatica, vol. 46, no. 8, pp. 1310-1314, 2010.



\bibitem{Aap}
{\sc A. Anta and P. Tabuada}, {\em To sample or not to sample: self-triggered control for nonlinear systems}, IEEE Trans. Autom. Control, vol. 55, no. 9, pp. 2030-2042, Sep. 2010.

\bibitem{LC2004}
{\sc W.L. Lu,  and T.P.  Chen},
{\em Synchronization Analysis of Linearly Coupled Networks of Discrete Time Systems}, Physica D, VOL. 198, PP. 148-168, 2004.

\bibitem{LC2007}
{\sc W.L. Lu,  and T.P.  Chen},
{\em Global Synchronization of Discrete-Time Dynamical Network With a Directed Graph}, IEEE Transactions on Circuits and
Systems-II: Express Briefs, vol. 54, no. 2, pp. 136-140, 2007.


\bibitem{Dvd}
{\sc D. V. Dimarogonas, E. Frazzoli, and K. H. Johansson}, {\em Distributed event-triggered control for multi-agent systems}, IEEE Trans. Autom. Control, vol. 57, no. 5, pp. 1291-1297, May 2012.

\bibitem{Zliu}
{\sc Z. Liu, Z. Chen, and Z. Yuan}, {\em Event-triggered average-consensus of multi-agent systems
with weighted and direct topology}, Journal of Systems Science and Complexity, vol. 25, no. 5, pp. 845-855, 2012.

\bibitem{Gss}
{\sc G. S. Seyboth, D. V. Dimarogonas, and K. H. Johansson}, {\em Event-based broadcasting for
multi-agent average consensus}, Automatica, vol. 49, pp. 245-252, 2013.

\bibitem{Yfg}
{\sc Y. Fan, G. Feng, Y. Wang, and C. Song}, {\em Distributed event-triggered control of multi-agent
systems with combinational measurements}, Automatica, vol. 49, pp. 671-675, 2013.

\bibitem{Gar}
{\sc E. Garcia,  et al.}, {\em Decentralised event-triggered cooperative control with limited communication}, International Journal of Control, vol. 86, no. 9, pp. 1479-1488, 2013.

\bibitem{Mxy}
{\sc X.Y. Meng, and T.W. Chen}, {\em Event based agreement protocols for multi-agent networks}, Automatica, vol. 49, pp. 2125-2132, 2013.

\bibitem{Zzf}
{\sc  W. Zhu,  Z.P. Jiang, and G. Feng},
{\em Event-based consensus of multi-agent systems with general linear models}, Automatica, vol. 50, pp. 552-558, 2014.

\bibitem{LC2006}
{\sc W.L. Lu,  and T.P.  Chen},
{\em New Approach to Synchronization
Analysis of Linearly Coupled Ordinary Differential Systems}, Physica D, 213,  214-230 2006.

\bibitem{Now}
{\sc C. Nowzari, and G. Cort\'{e}s},
{\em Zeno-free, distributed event-triggered communication and control for multi-agent average consensus},
 In Proc. Amer. Control Conf., pp. 2148-2153, 2014.

\bibitem{Die}
{\sc R. Diestel}, {\em Graph theory}, Graduate texts in mathematics 173, New York: Springer-Verlag Heidelberg, 2005.

\bibitem{Rah}
{\sc R. A. Horn, and C. R. Johnson}, {\em Matrix Analysis}, Cambridge, U.K.: Cambridge Univ. Press, 1987.

\bibitem{LC2006}
{\sc W.L. Lu,  and T.P.  Chen},
{\em New Approach to Synchronization Analysis of Linearly Coupled Ordinary Differential Systems}, Physica D, vol. 213, pp. 214-230, 2006

\bibitem{LC2007cms}
{\sc W.L. Lu,  and T.P.  Chen},
{\em A New Approach to Synchronization Analysis of Linearly Coupled Map Lattices}, Chinese Annals of Mathematics, vol. 28B, no. 2, pp. 149-160, 2007.

\bibitem{Joh}
{\sc K.H. Johansson, M. Egerstedt, J. Lygeros, and S.S. Sastry}, {\em On the regularization of zeno hybrid automata}, Systems and Control Letters, vol. 38, pp. 141-150, 1999.




\bibitem{Ctp}
{\sc T.P. Chen, X.W. Liu, and W.L. Lu}, {\em Pinning complex networks by a
single controller}, IEEE Trans. Circuits and Systems, vol. 54, no. 6, pp.
1317-1326, Jun. 2007.




\end{thebibliography}
\end{document}